\newcommand{\bc}{\begin{center}}
\newcommand{\ec}{\end{center}}
\newcommand{\bq}{\begin{quote}}
\newcommand{\eq}{\end{quote}}
\newcommand{\bqtn}{\begin{quotation}}
\newcommand{\eqtn}{\end{quotation}}
\newcommand{\beq}{\begin{equation}}
\newcommand{\eeq}{\end{equation}}
\newcommand{\bearr}{\begin{eqnarray}}
\newcommand{\eearr}{\end{eqnarray}}
\newcommand{\bearrn}{\begin{eqnarray*}}
\newcommand{\eearrn}{\end{eqnarray*}}
\newcommand{\bi}{\begin{itemize}}
\newcommand{\ei}{\end{itemize}}
\newcommand{\be}{\begin{enumerate}}
\newcommand{\ee}{\end{enumerate}}
\newcommand{\bthe}{\begin{theorem}}
\newcommand{\ethe}{\end{theorem}}
\newcommand{\blem}{\begin{lemme}}
\newcommand{\elem}{\end{lemme}}
\newcommand{\bsolu}{\begin{solution}}
\newcommand{\esolu}{\end{solution}}
\newcommand{\bexer}{\begin{exercise}}
\newcommand{\eexer}{\end{exercise}}
\newcommand{\ba}{\begin{array}}
\newcommand{\ea}{\end{array}}
\newtheorem{theoreme}{Theorem}[section]
\newtheorem{theorem}[theoreme]{Theorem}
\newtheorem{lemme}[theoreme]{Lemma}
\newtheorem{lemma}[theoreme]{Lemma}
\newtheorem{proposition}[theoreme]{Proposition}
\newtheorem{definition}[theoreme]{Definition}
\newtheorem{corollaire}[theoreme]{Corollary}
\newtheorem{solution}[theoreme]{Solution}
\newtheorem{exercise}[theoreme]{Exercise}
\newcommand{\bdefi}{\begin{definition}}
\newcommand{\edefi}{\end{definition}}
\newcommand{\brk}{\begin{remarque}}
\newcommand{\erk}{\end{remarque}}
\newcommand{\bpp}{\begin{proposition}}
\newcommand{\epp}{\end{proposition}}
\newcommand{\bpf}{\begin{proof}}
\newcommand{\epf}{\end{proof}}
\newcommand{\bcor}{\begin{corollaire}}
\newcommand{\ecor}{\end{corollaire}}
\newcommand{\bsol}{\begin{solution}}
\newcommand{\esol}{\end{solution}}
\theoremstyle{definition}
\newtheorem{remarque}[theoreme]{Remark}
\title{Generic polynomials for cyclic function field extensions over certain finite fields}
\author{Sophie Marques }
\begin{document}
\large
\selectlanguage{english}
\maketitle
\begin{abstract} 
In this paper, we find all the generic polynomials for geometric $\ell$-cyclic function field extensions over the finite fields $\mathbb{F}_q$ where $q= p^n$, $p$ prime integer such that $q \equiv -1 \mod \ell$ and $(\ell , p)=1$. 
\end{abstract}
Throughout the paper we will adopt the following notations unless mentioned differently.
$K$ denotes a one variable function field with field of constants $\mathbb{F}_q$ where $q= p^n$, $p$ prime integer. Given $x \in K$, we denote $\mathcal{O}_{K,x}$ is the integral closure of $\mathbb{F}_q[x]$ in $L$. Let $L/K$ be a cyclic Galois extension of degree $\ell$ with $(\ell , p)=1$. We write $\ell = \prod_{i=1}^s \ell_i^{f_i}$ where $\ell_i$ are distinct prime integers and $f_i$ positive integer. 
Let $\xi$ be a primitive $\ell^{th}$-root of unity. We suppose that $q\equiv -1 \mod \ell$. So that, by \cite[Theorem 2.10]{Con2}, $\mathbb{F}_q$ do not contains any primitive $\ell^{th}$ root of unity. More precisely, we have $[\mathbb{F}_q( \xi) : \mathbb{F}_q]=2$ and the minimal polynomial of $\xi$ over $ \mathbb{F}_q$ is $X^2 - (\xi + \xi^{-1} )X +1$ and $(\xi + \xi^{-1} )\in \mathbb{F}_q$. We denote by $\sigma$ the generator of $Gal(\mathbb{F}_q( \xi) ,\mathbb{F}_q)$, we have that $\sigma (\xi) = \xi^{-1}$. 

In this paper, our main result finds all the generic polynomials for geometric cyclic function field extensions over those finite fields $\mathbb{F}_q$ (Theorem \ref{m}). We find a one parameter family of generic polynomials of a cyclic extensions when $\ell$ is odd and a two parameters family of generic polynomials when $\ell$ is even (Corollary \ref{cm}). We also classify cyclic extensions up to isomorphism over those finite field $\mathbb{F}_q$ (Lemma \ref{clas}). Note that, in particular, this permits to classify all the geometric cyclic extensions of degree $3$, $4$ and $6$ over any finite fields $\mathbb{F}_q$. We describe the Galois action on a generator with a minimal polynomial in our form (Corollary \ref{gal}). We end the paper with the study of the ramification in term of our generation (Theorem \ref{ram}). In particular, we find that under our assumptions the ramified places are of even degree.

\section{Generic polynomials for cyclic extensions}
\begin{lemma} \label{1}
Let $L/K$ be a cyclic extension of degree $\ell$ with $q \equiv -1 \ mod \ \ell$.  Suppose $w$ is a Kummer generator for $L(\xi)/ K(\xi)$ whose minimal polynomial is of the form $X^{\ell} -a$, $y:=\sigma (w) + w$ is a generator for $L/K$ and $u:=\sigma(w) w \in K$. We write $\ell= 2 \iota + r$, where $r=0,1$. Then, the minimal polynomial of $y$ over $K$ is of the form
$$ P^{\ell}_{u , \alpha} (X)= X^\ell  -\ell u X^{\ell-2}  + \sum_{s=1}^{\iota} c_{s,\ell} u^s X^{\ell -2s}   - \alpha$$ 
where $\alpha = a + \sigma (a)$ and $ u^\ell = a \sigma(a)$, the coefficients $c^r_{s,j} \in \mathbb{F}_p$ and are defined recursively as
$$c_{s,j}^r= - \sum_{k=1}^{s} \binom{2j+r}{k} c_{s-k,j-1-k} \in \mathbb{F}_p$$
for $ 0 \leq s \leq j$ and $1 \leq j \leq \ell$ and $c_{0,j}^r=1$ for all $j \geq 0$. In particular, we have that $c_{1,j}^r= -(2j+r)$, $c_{j,j}^1= (-1)^{j}(2j+1)$ and  $c_{j,j}^0= (-1)^j2 $.\\
Defining, 
$$ Q^{l}_{u } (X)= X^l  -\ell u X^{l-2}  + \sum_{s=1}^{\iota} c_{s,l} u^s X^{l-2s} $$
for any $l$ integer where the $c_{s,l}$ are define recursively as before, we have that if $\ell= \prod_{i=1}^t l_i$ where $l_i$ are not necessarily distinct factors of $\ell$, then 
$$P^\ell_{u, \alpha} (X) = P_{l_t, u^{l/l_t} , \alpha} ( \cdots ( Q_{l_2, u^{l_1}  } ( Q_{l_1, u}(X))$$

\end{lemma}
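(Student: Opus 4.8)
The plan is to recognize that, for $l \ge 1$, the polynomials $Q^l_u$ are the Dickson polynomials of the first kind in the variable $X$ with parameter $u$, and that the asserted nested identity is exactly their composition (or ``multiplicative'') law. Concretely, I would first prove the functional equation
$$Q^n_u\!\left(w + u w^{-1}\right) = w^n + u^n w^{-n}$$
as an identity in the Laurent polynomial ring $\mathbb{F}_p[u][w,w^{-1}]$, where now $u$ and $w$ are treated as independent indeterminates. Writing $w_1 + w_2 = X$ and $w_1 w_2 = u$, this says precisely that $Q^n_u(X)$ is the $n$-th power sum $w_1^n + w_2^n$ expressed through the elementary symmetric functions $X$ and $u$. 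This is the abstract counterpart of the relation $Q^\ell_u(y) = w^\ell + \sigma(w)^\ell = \alpha$ already obtained in the first part of the lemma for the specific elements $y = \sigma(w)+w$ and $u = \sigma(w)w$; promoting it to a formal identity over $\mathbb{F}_p$ is what lets the composition argument run for an arbitrary factorization.

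To establish the functional equation I would induct on $n$ using the three-term recurrence
$$Q^n_u(X) = X\, Q^{n-1}_u(X) - u\, Q^{n-2}_u(X), \qquad Q^0_u = 2,\quad Q^1_u = X,$$
which is the Newton--Girard relation $w_1^n + w_2^n = (w_1+w_2)(w_1^{n-1}+w_2^{n-1}) - w_1 w_2 (w_1^{n-2}+w_2^{n-2})$ (the seed $Q^0_u=2$ is the usual Dickson convention and is only used to start the induction, since every factor $l_i$ occurring below is $\ge 2$). The one genuinely computational point, and the main obstacle, is to check that the coefficients produced by this recurrence coincide with the combinatorially defined $c^r_{s,j}$: I would substitute the general term $c_{s,l}\,u^s X^{l-2s}$ into the recurrence, collect the coefficient of each $X^{l-2s}$, and verify that the resulting relation among binomial coefficients is precisely $c^r_{s,j} = -\sum_{k=1}^{s}\binom{2j+r}{k} c_{s-k,\,j-1-k}$, together with the stated initial values $c_{0,j}=1$ and $c_{1,j}^r=-(2j+r)$. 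Once this matching is done the functional equation is immediate.

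With the functional equation in hand the binary composition law is a short substitution. Treating $w$ as an indeterminate and $u$ as a parameter, put $X = w + u w^{-1}$, so that $Q^n_u(X) = w^n + u^n w^{-n}$. Writing $v = w^n$, the right-hand side equals $v + u^n v^{-1}$, whence applying the functional equation for $Q^m$ with parameter $u^n$ gives
$$Q^m_{u^n}\!\left(Q^n_u(X)\right) = Q^m_{u^n}\!\left(v + u^n v^{-1}\right) = v^m + u^{nm} v^{-m} = w^{mn} + u^{mn} w^{-mn} = Q^{mn}_u(X).$$
Since $w + u w^{-1}$ is transcendental over $\mathbb{F}_p(u)$, substitution of $X = w + u w^{-1}$ is injective on polynomials in $X$, so $Q^{mn}_u = Q^m_{u^n}\circ Q^n_u$ holds as a polynomial identity in $\mathbb{F}_p[u][X]$.

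Finally I would iterate this law along the factorization $\ell = \prod_{i=1}^{t} l_i$ by induction on $t$, applying it at the $i$-th stage with $n = l_1\cdots l_{i-1}$ and $m = l_i$, which tracks the parameter as $u,\, u^{l_1},\, u^{l_1 l_2},\dots$ and yields
$$Q^{\ell}_u = Q^{l_t}_{u^{\ell/l_t}}\circ Q^{l_{t-1}}_{u^{\ell/(l_{t-1}l_t)}}\circ \cdots \circ Q^{l_2}_{u^{l_1}}\circ Q^{l_1}_u,$$
the outermost parameter being $u^{l_1\cdots l_{t-1}} = u^{\ell/l_t}$. Subtracting the constant $\alpha$ affects only the outermost polynomial, so that $Q^{l_t}_{u^{\ell/l_t}} - \alpha = P_{l_t,\,u^{\ell/l_t},\,\alpha}$ and hence $Q^\ell_u - \alpha = P^\ell_{u,\alpha}$, which is exactly the displayed nested expression for $P^\ell_{u,\alpha}(X)$.
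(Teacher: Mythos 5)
Your Dickson--polynomial framework is sound in outline: the functional equation $Q^n_u(w+uw^{-1})=w^n+u^nw^{-n}$ and the resulting composition law $Q^{mn}_u=Q^m_{u^n}\circ Q^n_u$ do give the nested identity, and the substitution/injectivity argument is correct. The genuine gap is exactly at the step you yourself single out as the ``main obstacle,'' and your plan for it would not go through as described. If you substitute $Q^l_u(X)=\sum_s c_{s,l}u^sX^{l-2s}$ into the three-term recurrence $Q^n_u=XQ^{n-1}_u-uQ^{n-2}_u$ and compare coefficients of $X^{n-2s}$, what you get is the two-term relation $c_{s,n}=c_{s,n-1}-c_{s-1,n-2}$, in which no binomial coefficients appear; it therefore cannot ``be precisely'' the lemma's recursion $c^r_{s,j}=-\sum_{k=1}^{s}\binom{2j+r}{k}c^r_{s-k,j-k}$ (note the lemma's printed subscript $j-1-k$ is a typo for $j-k$, as the paper's own induction shows). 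The two recursions do define the same numbers, but that is an additional theorem, not a coefficient comparison. The natural way to prove it is to expand $(w+uw^{-1})^{2j+r}$ binomially and regroup in symmetric pairs, obtaining $X^{2j+r}=\sum_{k}\binom{2j+r}{k}u^k\,Q^{2j+r-2k}_u(X)$ (where for even degree the middle term enters once, with coefficient $\binom{2j}{j}u^j$ rather than $u^jQ^0_u=2u^j$ --- the paper's $\omega^0_0=1$ convention), and then induct; but this regrouping-plus-induction is precisely the paper's own computation with the $\omega^r_j$. So your route does not bypass the core calculation of the lemma; since the lemma \emph{defines} $P^\ell_{u,\alpha}$ by that specific recursion, any proof must connect the recursion to the power sums, and as written your proposal never does.

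Where your approach genuinely differs, and improves on the paper, is the nesting identity itself. The paper proves it Galois-theoretically: each partial composition is identified as the minimal polynomial of $y$ over the intermediate fixed field $L^{l_1\cdots l_k}$, and the identity follows from uniqueness of minimal polynomials; this needs $w^{l_1\cdots l_k}$ to be a Kummer generator of the corresponding subextension and the whole tower structure of $L/K$. Your formal identity $Q^{mn}_u=Q^m_{u^n}\circ Q^n_u$ in $\mathbb{F}_p[u][X]$ needs no field theory at all, holds over any base ring, and cleanly isolates why the parameters propagate as $u,u^{l_1},u^{l_1l_2},\dots$ Two smaller omissions to repair: the lemma also asserts the particular values $c^r_{1,j}=-(2j+r)$, $c^1_{j,j}=(-1)^j(2j+1)$, $c^0_{j,j}=(-1)^j2$, which the paper proves via binomial identities and which in your framework are easy inductions on the three-term recurrence (they are the coefficient of $X^{l-2}$ and the constant or linear term of the Dickson polynomial), but you never address them; and you should state explicitly why $P^\ell_{u,\alpha}$ is the \emph{minimal} polynomial rather than merely an annihilating one --- it is monic of degree $\ell$ and vanishes at $y$, which by hypothesis generates the degree-$\ell$ extension $L/K$.
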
 
\begin{proof} 
Under the assumption of the theorem we determine the minimal polynomial of $y=w + \sigma (w)$. 
For if, $$\begin{array}{ccl} y^\ell &=& (w + \sigma (w))^\ell= \sum_{i=0}^\ell \binom{\ell}{i} w^i \sigma(w)^{\ell-i} \\
&=& w^\ell + \sigma(w)^\ell +\sum_{i=1}^{\ell-1} \binom{\ell}{i} w^i \sigma(w)^{\ell-i}\\
&=& a + \sigma (a ) +\sum_{i=1}^{\iota-(1-r)} \binom{\ell}{i} w^i \sigma(w)^{i}(\sigma(w)^{\ell-2i}+w^{\ell-2i}) + (1-r)\binom{\ell}{\iota} w^\iota \sigma (w)^\iota  \\ 
&=& a + \sigma (a) +\sum_{i=1}^{\iota-(1-r)} \binom{\ell}{i}u^i (\sigma(w)^{\ell-2i}+w^{\ell-2i}) + (1-r)\binom{\ell}{\iota} u^\iota \\ 
\end{array}$$ 
Note that $\ell - 2i$ is the same parity of $\ell$. 

We set $\omega_j^r= w^{2j+r}+ \sigma(w)^{2j+r} $ for $0 \leq j\leq \frac{\ell -1}{2}$ and $(j,r)\neq 0$ and $\omega_0^0=1$, then 
$$\omega_{j}^r=w^{2j+r}+ \sigma(w)^{2j+r} = y^{2j+r} - \sum_{k=1}^{j} \binom{2j+r}{k} u^k \omega_{j-k}^r $$
We have 
$$\omega_{0}^1 = y,$$
$$\omega_{1}^1 = y^3-3 u  y,$$
$$\omega_{2}^1 = y^5-5 u  y^3+5 u^2 y,$$
$$\omega_{1}^0 = y^2-2u$$
$$\omega_{2}^0 = y^4-4uy^2+2u^2$$
By induction in $j$ for $r=0,1$, we prove that 
$$\omega_j^r = \sum_{s=0}^j c_{s,j}^r u^s y^{2j+r-2s}$$
where the coefficients are obtained recursively as 
 $c_{s,j}^r=- \sum_{k=1}^{s} \binom{2j+r}{k} c_{s-k,j-k}^r $, $1\leq s \leq j$ and $j\geq 1$ and $c_{0,j}^r=1$ for all $j \geq 0$. We also prove that $c_{1,j}^r= -(2j+r)$, $c_{j,j}^1= (-1)^{j}(2j+1)$ and  $c_{j,j}^0= (-1)^j 2$.\\

From the computation above we see that this property is true for $j=1$. 
Suppose that this property is true for $\omega_t^r$, $1\leq t \leq j$, for some fixed $j$. We want to prove it remain true for $w_{j+1}^r$. Using the induction assumption, we obtain
$$\begin{array}{ccl} 
\omega_{j+1}&=& y^{2(j+1)+r} - \sum_{k=1}^{j+1} \binom{2(j+1)+r}{k}  u^k \omega_{j+1-k}^r \\
&=& y^{2(j+1)+r} -\sum_{k=1}^{j+1} u^k \binom{2(j+1)+r}{k}  \sum_{s=0}^{j+1-k}  c_{s,j+1-k}^r u^s y^{2(j+1)+r-2(s+k)}   \\
&=& y^{2(j+1)+r} - \sum_{k=1}^{j+1}   \sum_{s'=k}^{j+1} \binom{2(j+1)+r}{k}  u^{s'} c_{s'-k,j+1-k}^r y^{2(j+1)+r-2s'} \ \text{ where \ $s' := s+k$} \\
&=& y^{2(j+1)+r} +   \sum_{s'=1}^{j+1} (-\sum_{k=1}^{s'}   \binom{2(j+1)+r}{k} c_{s'-k,j+1-k}^r ) u^{s'} y^{2(j+1)+r-2s'}
\end{array}$$

Thus,  $c_{s,j+1}^r=- \sum_{k=1}^{s} \binom{2(j+1)+r}{k} c_{s-k,j+1-k}^r $, for $1\leq s \leq j+1$. 

Clearly, $c_{1,j+1}^r= -(2(j+1)+r)$. 

Note that 
\begin{equation}
 \begin{array}{ccl} 0 &=& (1-1)^{2n} \\ 
&=& \sum_{k=0}^{2n} \binom{2n}{k} (-1)^k \\
&=& \sum_{k=0}^n \binom{2n}{k} (-1)^k + \sum_{k=n+1}^{2n} \binom{2n}{k} (-1)^k \\ 
&=& \sum_{k=0}^n \binom{2n}{k} (-1)^k + \sum_{k=0}^{n-1} \binom{2n}{k} (-1)^k \\ 
&=& 2 \sum_{k=0}^{n-1} \binom{2n}{k} (-1)^k  + \binom{2n}{n} (-1)^n 
\end{array}
\end{equation}

Using the induction assumption, we have, 
$$\begin{array}{ccl} 
c_{j+1, j+1}^0 & =& - \sum_{k=1}^{j+1} \binom{2(j+1)}{k} c_{j+1-k,j+1-k}^0\\
&=& -2 (-1)^{j+1} \sum_{k=1}^{j} \binom{2(j+1)}{k} (-1)^k -  \binom{2(j+1)}{j+1}  \\
&=& 2 (-1)^{j+1}  -2 (-1)^{j+1} \sum_{k=0}^{j} \binom{2(j+1)}{k} (-1)^k -  \binom{2(j+1)}{j+1}  \\
&=&  (-1)^{j+1} 2  \text{ using $(1)$}
\end{array}$$
and 
$$\begin{array}{ccl} 
& & c_{j+1, j+1}^1\\
& =& - \sum_{k=1}^{j+1} \binom{2(j+1)+1}{k} c_{j+1-k,j+1-k}^0\\
&=&-  (-1)^{j+1} \sum_{k=1}^{j+1} \binom{2(j+1)+1}{k}  (-1)^{k}(2(j+1-k)+1)  \\
&=&  -(-1)^{j+1} \sum_{k=1}^{j+1} \binom{2(j+1)+1}{k}  (-1)^{k}(2(j+1)+1) + 2(-1)^{j+1} \sum_{k=1}^{j} \binom{2(j+1)+1}{k} k (-1)^{k}  \\
&=&  -(-1)^{j+1} (2(j+1)+1)( \sum_{k=1}^{j+1} (\binom{2(j+1)+1}{k}- 2\binom{2(j+1)}{k-1}) (-1)^{k}  ) \\
&=&  -(-1)^{j+1} (2(j+1)+1)( \sum_{k=1}^{j} \binom{2(j+1)}{k} (-1)^{k} - \sum_{k=1}^{j+1}  \binom{2(j+1)}{k-1}) (-1)^{k}) \\
&=&  -(-1)^{j+1} (2(j+1)+1) (\sum_{k=1}^{j+1} \binom{2(j+1)}{k} (-1)^{k} + \sum_{k=0}^{j}  \binom{2(j+1)}{k}) (-1)^{k}) \\
&=&  (-1)^{j+1} (2(j+1)+1) \text{ using (1)}
\end{array}$$
Moreover, note that when $\ell= \prod_{i=1}^t l_i$ where $l_i$ are non necessarily distinct factor of $\ell$, the minimal polynomial of $y$ over $L^{l_1}$ the fixed field of $L$ by the subgroup of $Gal(L/K)$ isomorphic to $\mathbb{Z}/ l_1 \mathbb{Z}$ is 
$$P^{l_1}_{u, \alpha_1}(X)$$ 
where $\alpha_1 = w^{l_1} + \sigma(w^{l_1} )$.

We let $Q^{l_1}_{ u}= P^{l_1}_{ u, \alpha_1}+ \alpha_1$. 
Since $y$ is a generator for $L/K$, we have that $[ K(w^{l_1} + \sigma(w^{l_1} )) : K] =\ell/ l_1$ and $w^{l_1}$ is a Kummer generator for $[ K(\xi )(w^{l_1} ) : K(\xi)]$, thus the minimal of $y$ over $L^{l_1l_2}$ the fixed field of $L$ by the subgroup of $Gal(L/K)$ isomorphic to $\mathbb{Z}/ l_1 l_2 \mathbb{Z}$ is 
$$P^{l_2}_{ u^{l_1} , \alpha_2 } ( Q_{l_1, u}(X))$$
where $\alpha_2 = w^{l_1l_2} + \sigma(w^{l_1l_2} )$.
Recursively, we obtain that the minimal polynomial $y$ of $L/K$ is 
$$P^{l_t}_{ u^{l/l_t} , \alpha} ( \cdots ( Q^{l_2}_{ u^{l_1}  } ( Q^{l_1}_{ u}(X))$$
where 
$Q^{l_k}_{ u}= P^{l_k}_{ u, \alpha_k}+ \alpha_k$ and $\alpha_k= w^{l_1l_2\cdots l_k} + \sigma(w^{l_1l_2\cdots l_k} )$.

By uniqueness of the minimal polynomial of $y$ over $K$, we obtain that
$$P^\ell_{u, \alpha} (X)= P_{l_t, u^{l/l_t} , \alpha} ( \cdots ( Q_{l_2, u^{l_1}  } ( Q_{l_1, u}(X))$$
\end{proof} 

\begin{remarque}
\begin{enumerate}
\item The previous lemma hold for general field extensions over a field of positive characteristic.
\item Note that when $2|\ell$, the polynomial $P^{\ell}_{u , \alpha}(X)$ only involve even degree monomials and when $(\ell , 2)=1$, the polynomial $P^{\ell}_{u , \alpha}(X)$ only involve odd degree monomials. 
\item We list some of those polynomials 
 \begin{itemize} 
\item[$\cdot$] $P^{3}_{u , \alpha}(X)=X^3-3uX- \alpha$, 
\item[$\cdot$]  $P^{5}_{u , \alpha}(X)=X^5-5u X^3+5u^2X - \alpha$,
\item[$\cdot$]  $P^{7}_{u , \alpha}(X)=X^7-7uX^5+14u^2 X^3-7u^3X-\alpha$, 
\item[$\cdot$]  $P^{9}_{u , \alpha}(X)= X^9-9uX^7+27u^2X^5-30u^3X^3+9u^4X-\alpha$, 
\item[$\cdot$]  $P^{11}_{u , \alpha}(X)=X^{11}-11uX^9+44u^2X^7-77u^3X^5+55u^4 X^3-11u^5 X- \alpha$ 
\item[$\cdot$]  $P^{13}_{u , \alpha}(X)=X^{13}-13uX^{11}+65u^2X^9-156u^3X^7+182u^4X^5-91u^5X^3+13 u^6X - \alpha$
\item[$\cdot$]  $P^{2}_{u , \alpha}(X)=X^2-2u-\alpha$, 
\item[$\cdot$]  $P^4_{u , \alpha}(X)= X^4-4uX^2+2u^2- \alpha$ 
\item[$\cdot$]  $P^6_{u , \alpha}(X)=X^6-6uX^4+9u^2X^2-2u^3 - \alpha$
\item[$\cdot$]   $P^8_{u , \alpha}(X)= X^8-8uX^6+20u^2X^4-16u^3X^2+2u^4- \alpha$.
\end{itemize}
We recognize $P^{3}_{u , \alpha}(X)$ when $u=1$ being the generic polynomials obtained in \cite{MWcubic} and \cite{MWcubic2}. We will prove that those polynomials $P^{\ell}_{u , \alpha}(X)$ are generic polynomials for cyclic extensions of degree $\ell$ over $\mathbb{F}_q$ when $q \equiv -1 \ mod \ 3$. 
\end{enumerate} 
\end{remarque}

\begin{lemma}\label{2} 
Let $L/K$ be a geometric cyclic extension of degree $\ell$ with $q \equiv -1 \ mod \ \ell$.  For any $z$  Kummer generator for $L(\xi)/ K(\xi)$, $z\sigma (z)\in K$. 
\end{lemma}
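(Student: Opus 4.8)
The plan is to pass to the Kummer field $L(\xi)$, extend $\sigma$ to it, and display $z\sigma(z)$ as an element fixed by the whole Galois group $Gal(L(\xi)/K)$. First I would fix the Galois-theoretic setup. Because $L/K$ is geometric, $\mathbb{F}_q$ is the exact field of constants of $L$, so the constant field extension $\mathbb{F}_q(\xi)/\mathbb{F}_q$ makes $L$ and $K(\xi)$ linearly disjoint over $K$; hence $L(\xi)/K$ is Galois with $[L(\xi):L]=2$ and $[L(\xi):K(\xi)]=\ell$. The two subgroups $Gal(L(\xi)/K(\xi))$ and $Gal(L(\xi)/L)$ are normal (as $L/K$ and $K(\xi)/K$ are Galois), intersect trivially, and have orders multiplying to $[L(\xi):K]$, so
$$Gal(L(\xi)/K) \cong Gal(L(\xi)/K(\xi)) \times Gal(L(\xi)/L) \cong \mathbb{Z}/\ell\mathbb{Z}\times\mathbb{Z}/2\mathbb{Z}$$
is abelian. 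I write $\tau$ for a generator of $Gal(L(\xi)/K(\xi))$, and I retain $\sigma$ for the order-two element of $Gal(L(\xi)/L)$ lifting the given generator of $Gal(\mathbb{F}_q(\xi),\mathbb{F}_q)$, so that $\sigma(\xi)=\xi^{-1}$ and $\sigma^2=\mathrm{id}$.

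Next I would normalise the action on the given generator. Since $K(\xi)$ contains a primitive $\ell^{th}$ root of unity and $z$ is a Kummer generator of $L(\xi)/K(\xi)$, Kummer theory gives $\tau(z)=\xi^k z$ with $\gcd(k,\ell)=1$; after replacing $\tau$ by the power that still generates the cyclic group $Gal(L(\xi)/K(\xi))$, I may assume $\tau(z)=\xi z$. The crucial computation is then forced by commutativity together with $\sigma(\xi)=\xi^{-1}$:
$$\tau(\sigma(z))=\sigma(\tau(z))=\sigma(\xi z)=\xi^{-1}\sigma(z),$$
whence $\tau(z\sigma(z))=(\xi z)(\xi^{-1}\sigma(z))=z\sigma(z)$, so that $z\sigma(z)$ lies in the fixed field $K(\xi)$ of $\tau$. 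Using $\sigma^2=\mathrm{id}$ I would then check the remaining invariance, $\sigma(z\sigma(z))=\sigma(z)\,\sigma^2(z)=\sigma(z)\,z=z\sigma(z)$, so $z\sigma(z)$ is fixed by $\sigma$ as well. Being fixed by both generators of the abelian group $Gal(L(\xi)/K)$, it lies in $K$, which is the assertion.

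I expect the genuine work to be concentrated entirely in the first step, namely justifying that $\sigma$ extends to an order-two automorphism of $L(\xi)$ fixing $L$ and that $Gal(L(\xi)/K)$ decomposes as the above direct product. This is precisely where the geometric hypothesis enters, through the equality of the constant field of $L$ with $\mathbb{F}_q$ and the resulting linear disjointness of $L$ and $K(\xi)$ over $K$; I would cite the standard theory of constant field extensions here. After that structural input the argument is purely formal eigenvalue bookkeeping and requires no further computation.
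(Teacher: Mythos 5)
Your proof is correct, and it takes a genuinely different route from the paper's. The paper never forms the group $Gal(L(\xi)/K)$: it writes $L(\xi)/K(\xi)$ as a tower of prime-degree Kummer steps $z^{\ell_1}=w_{1,1},\dots, w_{s,f_s}^{\ell_s}=a$, notes that $z\sigma(z)$ is $\sigma$-fixed (hence lies in $L$), and then descends through the tower via $(z\sigma(z))^{\ell_1}=w_{1,1}\sigma(w_{1,1})$, and so on, ruling out by cases (all $\ell_i$ odd; $\ell_s=2$ with $f_s=1$ or $f_s>1$) the possibility that any $w_{i,j}\sigma(w_{i,j})$ generates a nontrivial subextension of $L/K$ --- the obstruction each time being that such a subextension would be a cyclic radical, hence Kummer, extension of a field which, because $q\equiv -1 \bmod \ell$, contains no primitive $\ell_k$-th (or $4$-th) root of unity. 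You replace all of this by one structural fact and one computation: geometricity gives $L\cap K(\xi)=K$, hence $Gal(L(\xi)/K)\cong \mathbb{Z}/\ell\mathbb{Z}\times\mathbb{Z}/2\mathbb{Z}$ is abelian, and then $\tau(z\sigma(z))=(\xi z)(\xi^{-1}\sigma(z))=z\sigma(z)$ is forced by commutativity together with $\sigma(\xi)=\xi^{-1}$. Your route is shorter, uniform in $\ell$ (no parity casework), and makes visible exactly where each hypothesis enters: $q\equiv -1 \bmod \ell$ yields $\sigma(\xi)=\xi^{-1}$, and geometricity yields the direct product. In fact your normalization $\tau(z)=\xi z$ is not even needed: any $\tau\in Gal(L(\xi)/K(\xi))$ satisfies $\tau(z)=\zeta z$ with $\zeta^{\ell}=1$, and since $\sigma(\zeta)=\zeta^{-1}$ for every $\ell$-th root of unity $\zeta$, the element $z\sigma(z)$ is invariant under every element of the group at once. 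What the paper's heavier tower formalism buys is not economy in this lemma but continuity with the rest of the article, which manipulates the same explicit prime-degree decompositions and fixed fields in Lemma \ref{1} and Lemma \ref{3}; your argument, by contrast, isolates the group-theoretic content (abelianness of $Gal(L(\xi)/K)$) that the paper leaves implicit.
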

\begin{proof}
By \cite[Theorem 5.8.5]{Vil}, we know that $L(\xi ) / K(\xi)$ is a Kummer extension thus there exists a Kummer generator $z$ whose minimal polynomial is $X^\ell -a$ with $a \in K(\xi )$. 
Note that $\sigma (z)$ is also a Kummer generator for $L(\xi) / K(\xi)$ with minimal polynomial $X^\ell - \sigma(a)$.

We can write $L(\xi) / K(\xi)$ as a tower of prime degree extension 
$$z^{\ell_1} = w_{1, 1}, \cdots , w_{1,f_1}^{\ell_2}= w_{2,1} , \cdots , w_{s,f_s}^{\ell_s} = a$$ and we get a tower of cyclic prime degree extension 
$$L(\xi)/K(\xi)(w_{1,1}) /\cdots  \cdots / K(\xi)( w_{s,f_s})/ K(\xi ).$$ We set $w_{0,1}=z$, $w_{s+1, f_{s+1}}=a$ and  $f_0= f_{s+1}=1$. 

We want to prove that $z \sigma(z) \in K$. Since $\sigma (z \sigma (z) ) = z \sigma (z)$, $z\sigma ( z )\in L$ and we have 
$$(z\sigma(z))^{\ell_1} = w_{1, 1}\sigma(w_{1,1}), \cdots , (w_{1,f_1}\sigma (w_{1,f_1}))^{\ell_2}= w_{2,1} \sigma(w_{2,1}), \cdots , (w_{s,f_s}\sigma (w_{s,f_s}))^{\ell_s} = a\sigma (a)$$

Similarly, $w_{i,j} \sigma (w_{i,t_i}) \in K( w_{i,t_i})$, for any $1 \leq i \leq s$ and $1\leq t_i \leq f_i$. 

We have two cases, either all the $\ell_i$ are odd or one of the $\ell_i $ is equal to $2$, in which case we can suppose without loss of generality that $\ell_s=2$.
\begin{enumerate} 
\item {\sf Case 1: all $\ell_j$ are odd, for $1\leq j \leq s$.}

Since $(w_{i,j} \sigma (w_{i,j} )) ^{\ell_k} = w_{k, l}\sigma(w_{k,l})$, for any $0 \leq i \leq s$ and $1\leq t_i \leq f_i$ with 
\begin{itemize} 
\item[$\cdot$] $k= i+1$ and $l=1$, when $k=i$ and $j= f_i$, 
\item[$\cdot$] $l=j+1$ and $i=k$, if $ 0\leq j< f_i$, 
\end{itemize}
then either $w_{i,j} \sigma (w_{i,j}) \in K(w_{k, l}\sigma(w_{k,l}))$ or $K(w_{i,j} \sigma (w_{i,j} ))/ K(w_{k, l}\sigma(w_{k,l}))$ is a extension of degree $\ell_k$ in $L/K$. But the later case is impossible since $K(w_{i,j} \sigma (w_{i,j} ))/ K(w_{k, l}\sigma(w_{k,l}))$ is a subextension in the cyclic extension $L/K$  so it would be a Kummer extension, which is contradictory to the assumption that $q \equiv -1 \ mod\  \ell$ implying that $q \equiv -1 \ mod \ \ell_k$ (see \cite[Theorem 5.8.5]{Vil}). From this, we can prove that 
$$[K(z \sigma (z))/K ] = [ K (w_{1,1} \sigma (w_{1,1}) )/ K] = \cdots = [K(w_{s,f_s}\sigma (w_{s,f_s}))/K]=1$$ 
And thus, $z\sigma (z) \in K$. 
\item {\sf Case 2: one of the $\ell_i $ is equal to $2$, in which case we can suppose without loss of generality that $\ell_s=2$.}

From {\sf Case 1}, we know that $$[K(z \sigma (z))/K ] = [K(w_{s,1}\sigma (w_{s,1}))/K]$$
\begin{enumerate} 
\item If $f_s=1$, 
we have $(w_{s,1}\sigma (w_{s,1}))^2= a\sigma (a)$.

Thus either $w_{s,j}\sigma (w_{s,j}) \in K(w_{s,j+1}\sigma (w_{s,j+1}))$ or $w_{s,j}\sigma (w_{s,j} )$ is a Kummer generator for $K(w_{s,1}\sigma (w_{s,1}))/K$ and thus for $K(\xi)(w_{s,1}\sigma (w_{s,1}))/K(\xi)$. But then $w_{s,1}\sigma (w_{s,1})$ and $w_{s,1}$ are two Kummer generators for $K(\xi)(w_{s,1}\sigma (w_{s,1}))/K(\xi)$. Hence, by \cite[Theorem 5.8.5]{Vil}, there is $d\in K(\xi)$ such that $w_{s,1}\sigma (w_{s,1})= d w_{s,1}$, thus  $ w_{s,1}  = \sigma(d) \in K(\xi)$ which is impossible since $[K(\xi)(w_{s,1})/K(\xi)]=2$ by assumption. Thus $w_{s,1}\sigma (w_{s,1}) \in K$ and $[K(z \sigma (z))/K ]=1$ and $z\sigma(z)\in K$. 
\item If $f_s>1$, 
$$(w_{s,j}\sigma (w_{s,j}))^4 = w_{s,j+2}\sigma (w_{s,j+2})$$ 
for $1 \leq j \leq f_s-1$. 
Thus either $w_{s,j}\sigma (w_{s,j})$ is a Kummer generator of the degree $4$ subextension $K(w_{s,j}\sigma (w_{s,j}))/ K( w_{s,j+2}\sigma (w_{s,j+2}))$ of $L/K$ (since any subextension of a cyclic extension is cyclic) or $w_{s,j}\sigma (w_{s,j}) \in K(w_{s,j+1}\sigma (w_{s,j+1})$. 
But since $q \equiv -1 \ mod \ 2^{e_s}$ thus $q \equiv -1 \ mod \ 4$ and since $K$ does not contain a primitive $4^{th}$ root of unity $w_{s,j}\sigma (w_{s,j})$ cannot be a Kummer generator of degree $4$ and $w_{s,j}\sigma (w_{s,j}) \in K(w_{s,j+1}\sigma (w_{s,j+1})$. So that, we can prove that 
$$[K(z \sigma (z))/K ] = [K(w_{s,1}\sigma (w_{s,1}))/K]=[K(w_{s,2}\sigma (w_{s,2}))/K] = [K(w_{s,e_s}\sigma (w_{s,e_s}))/K]$$
with 
$$(w_{s,e_s} \sigma (w_{s,e_s}))^2 = a \sigma (a)$$ 
And using the same reasoning as in the case $(a)$ where $f_s=1$ we can deduce that $w_{s,e_s} \sigma (w_{s,e_s})\in K$ and finally, we have again $z \sigma (z)\in K$. 
\end{enumerate} 
\end{enumerate} 
\end{proof} 

\begin{lemma} \label{3} 
Let $L/K$ be a geometric cyclic extension of degree $\ell$ with $q \equiv -1 \ mod \ \ell$.  For any $z$ Kummer generator for $L(\xi)/ K(\xi)$, there is $\eta \in \mathbb{F}_q(\xi)^*$ such that  $\sigma (\eta z) + \eta z$ is a generator for $L/K$ and $\sigma (\eta ) \eta =1$. 
\end{lemma}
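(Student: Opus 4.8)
The plan is to exploit the direct-product structure of $\Gal(L(\xi)/K)$ and to reduce the ``is a generator'' requirement to a single non-degeneracy statement about the ratio $\sigma(z)/z$. First I would record the relevant Galois theory. Since $L/K$ is geometric while $K(\xi)/K$ is the degree-two constant field extension $\mathbb{F}_q(\xi)/\mathbb{F}_q$, we have $L\cap K(\xi)=K$ and $L(\xi)=L\cdot K(\xi)$, so $\Gal(L(\xi)/K)\cong \Gal(L/K)\times\Gal(K(\xi)/K)=\langle\tau\rangle\times\langle\sigma\rangle$, where $\tau$ has order $\ell$ and $\sigma$ is the canonical extension of the generator of $\Gal(K(\xi)/K)$. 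In particular $\sigma$ and $\tau$ commute, $\sigma(\xi)=\xi^{-1}$, and $L=L(\xi)^{\langle\sigma\rangle}$. Writing $\tau(z)=\xi^k z$ with $\gcd(k,\ell)=1$ (Kummer theory), commutativity gives $\tau(\sigma(z))=\xi^{-k}\sigma(z)$.

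Next, for $\eta\in\mathbb{F}_q(\xi)^*$ I set $w=\eta z$. Since $\eta$ is a constant, $\tau(\eta)=\eta$ and $w^\ell=\eta^\ell z^\ell\in K(\xi)$, so $w$ is again a Kummer generator, and $\sigma(w)+w$ is visibly $\sigma$-invariant, hence lies in $L=L(\xi)^{\langle\sigma\rangle}$. I would then compute the $\Gal(L/K)$-conjugates
$$\tau^j(\sigma(w)+w)=\xi^{-jk}\sigma(w)+\xi^{jk}w,\qquad j=0,\dots,\ell-1,$$
and observe that two of them coincide, for $j\neq j'$, exactly when $\sigma(w)/w=\xi^{(j+j')k}$. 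As $\gcd(k,\ell)=1$ and $\ell\geq 3$, the exponents $(j+j')k$ with $j\neq j'$ run through every residue modulo $\ell$; hence $\sigma(w)+w$ generates $L/K$ if and only if $\sigma(w)/w\notin\langle\xi\rangle$.

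The role of the condition $\sigma(\eta)\eta=1$ enters here: it gives $\sigma(\eta)/\eta=\eta^{-2}$, so $\sigma(w)/w=\eta^{-2}\,\sigma(z)/z$ is a nonzero constant multiple of $\sigma(z)/z$, and simultaneously $w\sigma(w)=\eta\sigma(\eta)\,z\sigma(z)=z\sigma(z)$, so the invariant $u$ of Lemma~\ref{1} is left unchanged by the twist. Thus it suffices to prove that $\sigma(z)/z$ is not a constant. If it were, say $\sigma(z)/z=c\in\mathbb{F}_q(\xi)^*$, then Lemma~\ref{2} would give $c\,z^2=z\sigma(z)\in K$, forcing $z^2\in K(\xi)$ and hence $[K(\xi)(z):K(\xi)]\leq 2$; but this degree equals $\ell\geq 3$ (the hypothesis $[\mathbb{F}_q(\xi):\mathbb{F}_q]=2$ rules out $\ell\leq 2$), a contradiction. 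Consequently $\sigma(z)/z$, and with it $\sigma(w)/w=\eta^{-2}\,\sigma(z)/z$, is non-constant and in particular lies outside $\langle\xi\rangle$.

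It follows that $\sigma(\eta z)+\eta z$ generates $L/K$ for \emph{every} norm-one $\eta$, so existence is immediate (one may even take $\eta=1$, the norm-one elements being parametrized by $\{\sigma(\beta)/\beta\}$ through Hilbert 90, which gives the freedom used later to adjust $\alpha$ while fixing $u$). I expect the only genuine obstacle to be the non-degeneracy step of the previous paragraph, namely ruling out $\sigma(z)/z\in\langle\xi\rangle$; everything else is formal Galois theory together with the short conjugate computation above.
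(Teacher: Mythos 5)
Your proof is correct, but it takes a genuinely different route from the paper's, and it actually proves more. The paper never exploits the direct-product structure $\Gal(L(\xi)/K)\cong\langle\tau\rangle\times\langle\sigma\rangle$; instead it writes $z=z_1+\xi z_2$, shows that for each prime power $\ell_i^{f_i}\,\|\,\ell$ one of $\mathfrak{z}_1=z+\sigma(z)$, $\mathfrak{z}_2=\xi^{-1}z+\sigma(\xi^{-1}z)$ generates $L/L^{\ell_i^{f_i}}$, and then, when $\ell$ has several prime factors, runs an iterative covering argument, twisting by norm-one constants $1+\chi\xi^{-1}\in U_{q+1}$ and enlarging at each step the collection of layers that are generated, until some $\eta\in U_{q+1}$ works; this yields only the existence of some $\eta$. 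Your argument --- commutativity of $\sigma$ and $\tau$, the eigenvalue relations $\tau(z)=\xi^k z$ and $\tau(\sigma(z))=\xi^{-k}\sigma(z)$, and the explicit list of conjugates $\xi^{-jk}\sigma(w)+\xi^{jk}w$ --- reduces generation to the single condition $\sigma(w)/w\notin\langle\xi\rangle$, which follows from non-constancy of $\sigma(z)/z$. Your appeal to Lemma \ref{2} for that last step is legitimate (Lemma \ref{2} precedes Lemma \ref{3} and does not depend on it), but you could avoid it entirely: applying $\tau$ to the hypothetical relation $\sigma(z)=cz$ gives $\xi^{-k}\sigma(z)=c\xi^{k}z$, hence $\xi^{2k}=1$, impossible for $\ell\geq 3$ with $(k,\ell)=1$. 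The payoff of your approach is a strictly stronger conclusion: $\eta=1$ already works, i.e.\ $z+\sigma(z)$ generates $L/K$ for \emph{every} Kummer generator $z$, so the paper's case analysis over the prime factorization of $\ell$ and the counting of elements of $U_{q+1}$ are unnecessary; moreover the same structural observation gives Lemma \ref{2} in one line ($z\sigma(z)$ is fixed by both $\tau$ and $\sigma$, hence lies in $K$), and the downstream results (Theorem \ref{m}, Corollary \ref{sigm}), which invoke Lemma \ref{3} to produce a norm-one twist, could simply cite your statement with $\eta=1$. One cosmetic remark: the ``only if'' half of your equivalence (surjectivity of $(j+j')k$, $j\neq j'$, onto residues mod $\ell$) is never used in the proof --- you only need that a coincidence of conjugates forces $\sigma(w)/w\in\langle\xi\rangle$ --- so it can be dropped.
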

\begin{proof}
We write $z= z_1 + \xi z_2$, we have thus that $z_1, \ z_2 \in L$ and $z_2\neq 0$ since $z \notin L$ and not both $z_1$ and $z_2$ are in proper the subextension of $L/K$ over $K$ since $z$ is a generator for $L(\xi ) / K(\xi)$. 



We have $\xi + \xi^{-1} \neq \pm 2$, since if this equality was satisfied, we would have $X^2 - (\xi + \xi^{-1} )X +1= X^2 \mp 2 X +1$ is not irreducible over $\mathbb{F}_q$ contradicting that $q \equiv -1 \ mod \ \ell$ and thus $[\mathbb{F}_q(\xi): \mathbb{F}_q]=2$. 

Taking $\zeta = \frac{ \xi - \xi^{-1}  }{( \xi + \xi^{-1} )^2-4 },$
$$  \zeta z + \sigma (\zeta z)  = \frac{ \xi - \xi^{-1}  }{ ( \xi + \xi^{-1} )^2-4 } (z_1 +\xi z_2) + \frac{ \xi^{-1} - \xi  }{ ( \xi + \xi^{-1} )^2-4 } (z_1 +\xi^{-1} z_2)=  z_2. $$ 
Taking $\zeta' = \frac{ \xi^{-2} - 1  }{( \xi + \xi^{-1} )^2-4 }$, we get
$$  \zeta' z + \sigma (\zeta' z)  = \frac{ \xi^{-2} - 1 }{ ( \xi + \xi^{-1} )^2-4 } (z_1 +\xi z_2) + \frac{ \xi^{2} - 1 }{ ( \xi + \xi^{-1} )^2-4 } (z_1 +\xi^{-1} z_2)=  z_1. $$ 

For any $1 \leq k \leq f_i$ and any $1 \leq i \leq s$, we denote $L^{\ell_i^k}$ be the fixed field of $L$ by the unique subgroup of $Gal(L/K)$ isomorphic to $\mathbb{Z}/\ell_i^{k} \mathbb{Z}$. Since $z$ is a generator for $L(\xi)$ over $K(\xi)$, then $z$ is also a generator for $L(\xi)$ over $L^{\ell_i^k}(\xi ) $, for any $1 \leq k \leq f_i$ and any $1 \leq i \leq s$. 
Given $ 1 \leq i \leq s$, either $z_1$ or $z_2$ is a generator of $L/L^{\ell_i^{f_i}}$, indeed if that was not the case then both $z_1$ and $z_2$ would belong to a proper subextension of $L/L^{\ell_i^{f_i}}$ but since any such proper extension is contained in $L^{\ell_i^{f_i-1}}/K$. Then $z_1$ and $z_2$ would both belong to $L^{\ell_i^{f_i-1}}$ and $z=z_1 + \xi z_2 \in L^{\ell_i^{f_i-1}}(\xi)$ contradicting that $z$ is a generator for $L(\xi ) / L^{\ell_i^{f_i}}(\xi )$. 

That also implies in this case, that either $\mathfrak{z}_1=z + \sigma (z)$ or $\mathfrak{z}_2=\xi^{-1} z+ \sigma (\xi^{-1} z)$ is a generator for $L/L^{\ell_i^{f_i}}$. Indeed, $z + \sigma (z) = 2 z_1 + (\xi + \xi^{-1} ) z_2$ and $\xi^{-1} z + \sigma (\xi^{-1} z) = 2 z_2 + (\xi + \xi^{-1} ) z_1$. As before, if none of them was a  generator for $L/L^{\ell_i^{f_i}}$, we would have that they both belong to $L^{\ell_i^{f_i-1}}$ but then the same would be true for
$$\begin{array}{ccl} && (\xi + \xi^{-1} ) (\xi^{-1} z + \sigma (\xi^{-1} z) )- 2 (z + \sigma ( z) )\\ &=&  [(\xi + \xi^{-1} )^2-4] z_2\\ &=& (\xi + \xi^{-1} -2)  (\xi + \xi^{-1} +2) z_2 \end{array}$$ 
and for 
$$\begin{array}{ccl} && (\xi + \xi^{-1} )  (z + \sigma ( z) )-2(\xi^{-1} z + \sigma (\xi^{-1} z) ) \\ &=&  [(\xi + \xi^{-1} )^2-4] z_1\\ &=& (\xi + \xi^{-1} -2)  (\xi + \xi^{-1} +2) z_1 \end{array}$$ 
Contradicting that either $z_1$ or $z_2$ is a generator for $L/L^{\ell_i^{f_i}}$, since $\xi + \xi^{-1}\neq \pm 2$.\\

When $s=1$, the theorem is proven by the above.

Otherwise, when $s>1$, note that if we have $w\in L$ generator for $L$ over $L^{\ell_i^{f_i}}$ and $L^{\ell_j^{f_j}}$ for $i\neq j$. Then $[L: L^{\ell_i^{f_i}}]=[L^{\ell_i^{f_i}}(w) : L^{\ell_i^{f_i}}]= \ell_i^{f_i}$ and $[L: L^{\ell_j^{f_j}}]=[L^{\ell_j^{f_j}}(w) : L^{\ell_j^{f_j}}]= \ell_j^{f_j}$ and $ \ell_i^{f_i}  \ell_j^{f_j}| [L^{\ell_i^{f_i}\ell_j^{f_j}}(w) : L^{\ell_i^{f_i}\ell_j^{f_j}}]$ where $L^{\ell_i^{f_i}\ell_j^{f_j}}$ is the fixed field of $L$ by the unique subgroup of $Gal(L/K)$ isomorphic to $\mathbb{Z}/(\ell_i^{f_i} \ell_j^{f_j})\mathbb{Z}$ and $L=L^{\ell_i^{f_i}\ell_j^{f_j}}(w)$. Thus, $\mathfrak{z}_1$ is a generator for $L/L^I$ and $\mathfrak{z}_2$ is a generator for $L/L^J$ where $I$ and $J$ are subsets of $\{ 1 , \cdots , s\}$ such that $I\cup J=\{ 1 , \cdots , s\}$ with $I$ and $J$ maximal subsets having this property and defining $L^I$ to be the fixed field of $L$ by the unique subgroup of $Gal(L/K)$ isomorphic to $\mathbb{Z}/ (\prod_{i \in I}\ell_i^{f_i}) \mathbb{Z}$ and $L^J$ to be the fixed field of $L$ by the unique subgroup of $Gal(L/K)$ isomorphic to $\mathbb{Z}/ (\prod_{i \in J}\ell_i^{f_i} ) \mathbb{Z}$. \\

As a consequence, if $\mathfrak{z}_1 = k \mathfrak{z}_2$ for some $k \in K^*$ then $\mathfrak{z}_1=z + \sigma (z)$ and $\mathfrak{z}_2=\xi^{-1} z+ \sigma (\xi^{-1} z)$ are both generators for $L/K$ and the theorem is proven. \\

Also, if either $I$ or $J$ is the all $\{1,\cdots , s\}$ then either $\mathfrak{z}_1=z + \sigma (z)$ or $\mathfrak{z}_2=\xi^{-1} z+ \sigma (\xi^{-1} z)$ is a generator for $L$ over $K$ and the theorem is again proven.\\

Otherwise, there is $i_0 \in \{ 1, \cdots , s\} \backslash I$ and $i_1 \in \{ 1, \cdots , s\} \backslash J$ and $i_0\neq i_1$. 
Note also that since by assumption $\ell= \prod_{i=1}^s \ell_i^{f_i} | q-1$ then $q-1\geq \ell > s$. Indeed, $\ell$ is bigger or equal to the product of the $s$ smallest prime numbers and the $s^{th}$ smallest prime number is already bigger than $s$.  We denote $U_{q+1}$ the set of the $(q+1)^{th}$ roots of unity in $\mathbb{F}_q(\xi)^*$, we have $U_{q+1}= q+1>s+2$. 
Note that by assumption $\xi$ and $\xi^{-1}$ is a root of $R(X)=X^2 - (\xi + \xi^{-1}) X+1$ with $ (\xi + \xi^{-1}) \in \mathbb{F}_q$. Hence, we have also that $\xi^q$ is a root of $R(X)$ and thus either $\xi= \xi^q$ or $\xi^{-1} = \xi^q$ but since $\xi\notin \mathbb{F}_q$, we have $\xi^{-1} = \xi^q$. So that, for $\chi=c_1 + \xi c_2\in U_{q+1}$, we have 
$$\chi \sigma (\chi ) = (c_1+ \xi c_2) (c_1+ \xi^{-1} c_2)= (c_1+ \xi c_2) (c_1+ \xi^{q} c_2)= (c_1+ \xi c_2)^{q+1}=1$$ 

 Moreover, $\mathfrak{z}_1 , \ \mathfrak{z}_2$ and $\mathfrak{z}_1 + \chi \mathfrak{z}_2$ are distinct elements in $L$, for $\chi \in \mathbb{F}_q(\xi)^*$ and $\xi^{-1} \chi +1\in U_{q+1}$, since by assumption, $\mathfrak{z}_1 \neq k \mathfrak{z}_2$, for any $k, l \in K$. \\ 

Let $\chi_1 \in \mathbb{F}_q(\xi)^*$ such that $1 + \chi_1 \xi^{-1} \in U_{q+1}$ and $1 \leq i \leq s$, then either $\mathfrak{z}_1$ or $\mathfrak{z}_1 + \chi_1 \mathfrak{z}_2$ is a generator for $L/ L^{\ell_i^{f_i}}$, indeed, otherwise as before $\mathfrak{z}_1$ and $\mathfrak{z}_1 + \chi_1 \mathfrak{z}_2$ would both belong to $ L^{\ell_i^{f_i-1}}$, and $\chi_1 \mathfrak{z}_2 = \mathfrak{z}_1 + \chi_1 \mathfrak{z}_2- \mathfrak{z}_1\in L^{\ell_i^{f_i-1}}$, then $\mathfrak{z}_1$ and $\mathfrak{z}_2$ would both belong to $L/ L^{\ell_i^{f_i-1}}$ again contradicting that $z$ generates $L$ over $K$ as proven above. Thus, $\mathfrak{z}_1 + \chi_1 \mathfrak{z}_2$ is a generator of $L/L^{J_1}$ where $J_1$ is a subset of $\{ 1 , \cdots , s\}$ such that $I\cup J_1=\{ 1 , \cdots , s\}$ with $J_1$ maximal subset having this property and $L^{J_1}$ is the fixed field of $L$ by the unique subgroup of $Gal(L/K)$ isomorphic to $\mathbb{Z}/ (\prod_{i \in J_1}\ell_i^{f_i} )\mathbb{Z}$. Similarly, given $1\leq i \leq s$, we can prove that either $\mathfrak{z}_2$ or $\mathfrak{z}_1 + \chi_1 \mathfrak{z}_2$ is a generator for $L/ L^{\ell_i^{f_i}}$ and thus $J\cup J_1= \{1 ,\cdots , s\}$. \\

If $J_1$ is all $\{1 , \cdots , s\}$ then $\mathfrak{z}_1 + \chi_1 \mathfrak{z}_2 = \eta z + \sigma ( \eta z )$ is a generator $L/K$ where $\eta = 1 + \chi_1 \xi^{-1} \in U_{q+1} $ thus $\eta \sigma (\eta )=1$,  proving the theorem. \\

Otherwise, there is $i_2 \in \{1 ,\cdots , s\} \backslash J_1$. But since $J\cup J_1= \{1 ,\cdots , s\}$ and $I\cup J_1= \{1 ,\cdots , s\}$ then $i_0$ and $i_1\in J_1$ thus $i_0$, $i_1$ and $i_2$ are distinct and $|J_1|\geq 2$. 

We then let $\chi_2 \in \mathbb{F}_q(\xi)^*$ such that $1 + \chi_2 \xi^{-1} \in U_{q+1}$ and $\chi_1 \neq \chi_2$, as before we prove that $\mathfrak{z}_1 + \chi_2 \mathfrak{z}_2 $ is also a generator of $L/L^{J_2}$ where $J_2$ is a subset of $\{ 1 , \cdots , s\}$ such that $I\cup J_2=\{ 1 , \cdots , s\}$, $L^{J_2}$ is the fixed field of $L$ by the unique subgroup of $Gal(L/K)$ isomorphic to $\mathbb{Z}/ (\prod_{i \in J_2}\ell_i^{e_i} )\mathbb{Z}$ with $I\cup J_2= \{ 1, \cdots , s \}$ and $J \cup J_2= \{ 1, \cdots , s \}$ with $J_2$ maximal subset having this property. Given $1 \leq i \leq s$, we can also prove using the same argument as before that either $\mathfrak{z}_1 + \chi_1 \mathfrak{z}_2$ or $\mathfrak{z}_1 + \chi_2 \mathfrak{z}_2$ is a generator for $L/ L^{\ell_i^{e_i}}$ since $\mathfrak{z}_1 + \chi_1 \mathfrak{z}_2 - (\mathfrak{z}_1 + \chi_2 \mathfrak{z}_2) = (\chi_1 - \chi_2) \mathfrak{z}_2$ and $\frac{1}{\chi_1} (\mathfrak{z}_1 + \chi_1 \mathfrak{z}_2) - \frac{1}{\chi_2} (\mathfrak{z}_1 + \chi_2 \mathfrak{z}_2)= \frac{\chi_2 - \chi_1}{\chi_1\chi_2}\mathfrak{z}_1$. So that $J_1 \cup J_2 = \{ 1 , \cdots , s \}$. \\

If $J_2$ is all $\{1 , \cdots , s\}$ then $\mathfrak{z}_1 + \chi_2 \mathfrak{z}_2 = \eta z + \sigma ( \eta z )$ is a generator $L/K$ where $\eta = 1 + \chi_2 \xi^{-1} $ proving the theorem. \\

Otherwise, there is $i_3 \in \{ 1, \cdots , s\} \backslash J_1 $, and since $I\cup J_2 = J\cup J_2 = J_1 \cup J_2 =\{ 1, \cdots , s\}$ we have $i_0 ,i_1 , i_2 \in J_2$ and thus $i_0 , i_1 , i_2, i_3$  are all distinct and $|J_2| \geq 3$. Reproducing this process, with $\chi_3 , \cdots \chi_{s-1} \in  \mathbb{F}_q(\xi)^*$ such that $1 + \chi_i \xi^{-1} \in U_{q+1}$, for $1 \leq i \leq s-1$, all distinct and distinct from $\chi_1$ and $\chi_2$ which exist since $s< |U_{q+1}|$, we find that either $\mathfrak{z}_1 + \chi_k \mathfrak{z}_2$ is a generator for $L/K$ for some $k \in \{ 3, \cdots , s-2\}$ or $|J_{s-1}|\geq s$ and $\mathfrak{z}_1 + \chi_{s-1} \mathfrak{z}_2$ is a generator for $L/K$. In any case, we find $\eta \in U_{q+1}$ such that $\eta z + \sigma (\eta z)$ is a generator for $L/K$

\end{proof} 
\begin{theoreme} \label{m} 
Let $L/K$ be a geometric cyclic extension of degree $\ell$ with $q \equiv -1 \ mod \ \ell$. There exists $w$ a Kummer generator for $L(\xi)/ K(\xi)$ whose minimal polynomial is $X^\ell -a$ such that $y:=\sigma (w) + w$ is a generator for $L/K$ and $u:=\sigma(w) w \in K$ so that the minimal polynomial of $y$ over $K$ is $ P^{\ell}_{u , \alpha} (X)$ as in lemma \ref{1} where $ \alpha = a + \sigma (a)$. Conversely, if $L/K$ is a geometric extension of degree $\ell$ with $q \equiv -1 \ mod \ \ell$ and a generator $y$ whose minimal polymomial is of form $ P^{\ell}_{u , \alpha} (X)$ where $u , \alpha \in K$ such that $u^\ell= a \sigma  (a)$ and $\alpha = a+ \sigma (a)$, for some $a\in K(\xi)\backslash K$ then $L/K$ is a cyclic extension.
\end{theoreme}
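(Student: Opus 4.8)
The plan is to treat the two implications separately: the forward statement is a direct assembly of Lemmas \ref{1}, \ref{2} and \ref{3}, while the converse is proved by reconstructing a Kummer generator from the shape of $P^\ell_{u,\alpha}$ and analysing a small Galois extension by hand.

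For the forward direction, I would first note that since $L/K$ is geometric we have $\xi\notin L$, so $L\cap K(\xi)=K$ and $[L(\xi):K(\xi)]=\ell$; as $K(\xi)$ contains a primitive $\ell^{th}$ root of unity and $(\ell,p)=1$, the extension $L(\xi)/K(\xi)$ is cyclic Kummer, whence by \cite[Theorem 5.8.5]{Vil} it admits a generator $z$ with minimal polynomial $X^\ell-a_0$, $a_0\in K(\xi)$. Applying Lemma \ref{3} to $z$ yields $\eta\in\mathbb{F}_q(\xi)^*$ with $\sigma(\eta)\eta=1$ such that $y:=\sigma(\eta z)+\eta z$ generates $L/K$. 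I then set $w:=\eta z$, which is again a Kummer generator (multiplication by $\eta\in K(\xi)^*$) with minimal polynomial $X^\ell-a$, where $a=\eta^\ell a_0\in K(\xi)$. The relation $\sigma(\eta)\eta=1$ gives $u:=w\sigma(w)=\eta\sigma(\eta)\,z\sigma(z)=z\sigma(z)$, which lies in $K$ by Lemma \ref{2}; moreover $\alpha:=a+\sigma(a)=w^\ell+\sigma(w)^\ell$ is $\sigma$-fixed, hence in $K$, with $u^\ell=a\sigma(a)$. Now $w$, $y$ and $u$ satisfy exactly the hypotheses of Lemma \ref{1}, which identifies $P^\ell_{u,\alpha}(X)$ as the minimal polynomial of $y$ over $K$, completing this direction.

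For the converse, the plan is to reverse the construction. Since $a\in K(\xi)\setminus K$ with $\alpha=a+\sigma(a)$ and $u^\ell=a\sigma(a)$, I fix $w$ in an algebraic closure with $w^\ell=a$, put $w':=u/w$ (so $w'^\ell=\sigma(a)$), and set $M:=K(\xi)(w)$. The key algebraic input is the polynomial identity established inside the proof of Lemma \ref{1}, namely $w^\ell+w'^\ell=Q^\ell_u(w+w')$ whenever $w+w'$ and $ww'=u$ play the roles of $y$ and $u$; since this is a formal identity it applies here and gives $P^\ell_{u,\alpha}(y_0)=Q^\ell_u(y_0)-\alpha=(a+\sigma(a))-\alpha=0$ for $y_0:=w+w'$. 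Thus $y_0$ is a root of $P^\ell_{u,\alpha}$, which is irreducible over $K$ (being the minimal polynomial of the generator $y$ of $L/K$), so $K(y_0)\cong_K L$ and it suffices to prove that $K(y_0)/K$ is cyclic.

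Finally I would analyse $M$. The roots of $g(X):=(X^\ell-a)(X^\ell-\sigma(a))=X^{2\ell}-\alpha X^\ell+u^\ell\in K[X]$ are the elements $\xi^i w$ and $\xi^i w'$, all lying in $M$, and since $\xi$ is a ratio of two of them, $M$ is the splitting field of the separable polynomial $g$ over $K$, so $M/K$ is Galois. A degree count pins down $[M:K]$: writing $d:=[M:K(\xi)]\mid\ell$, the inclusion $K(y_0)\subseteq M$ forces $\ell\mid[M:K]=2d$, so $d=\ell$ or $d=\ell/2$; the latter would give $\xi\in M=K(y_0)$, contradicting that $K(y_0)\cong_K L$ is geometric, hence $d=\ell$ and $X^\ell-a$ is irreducible over $K(\xi)$. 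I then take $\tau\in\Gal(M/K(\xi))$ with $\tau(w)=\xi w$ and extend $\sigma$ to $\tilde\sigma\in\Gal(M/K)$ by $\tilde\sigma(w)=w'=u/w$; one checks $\tilde\sigma^2=\mathrm{id}$ (from $ww'=u\in K$) and, crucially, that $\tau$ and $\tilde\sigma$ commute, both $\tau\tilde\sigma$ and $\tilde\sigma\tau$ sending $w\mapsto u/(\xi w)$ and $\xi\mapsto\xi^{-1}$. Hence $\Gal(M/K)=\langle\tau\rangle\times\langle\tilde\sigma\rangle$ is abelian of order $2\ell$, $K(y_0)=\Fix(\tilde\sigma)$ (as $\tilde\sigma$ fixes $y_0=w+u/w$), and $\Gal(K(y_0)/K)\cong\langle\tau\rangle\cong\mathbb{Z}/\ell\mathbb{Z}$ is cyclic; transporting along $K(y_0)\cong_K L$ shows $L/K$ is cyclic. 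The main obstacle is precisely this last bundle of verifications in $M$: collapsing the degree dichotomy via geometricity to ensure $w$ generates a genuine degree-$\ell$ Kummer extension, and confirming that the extended involution $\tilde\sigma$ commutes with the Kummer automorphism $\tau$, so that the quotient group is cyclic rather than merely containing a cyclic subgroup.
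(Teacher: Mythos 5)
Your proof is correct. The forward direction is exactly the paper's, which likewise just assembles Lemmas \ref{1}, \ref{2} and \ref{3}; your bookkeeping ($w=\eta z$, $u=\eta\sigma(\eta)\,z\sigma(z)=z\sigma(z)\in K$ by Lemma \ref{2}, then Lemma \ref{1} applied to $w$) is precisely what the paper leaves implicit. For the converse, both arguments start from the same germ --- the formal identity behind Lemma \ref{1} relating $y$ to the two roots of $X^2-yX+u$ --- but then run in opposite directions. The paper works inside $L$: it adjoins a root $z_1$ of $X^2-yX+u$, shows $z_1^\ell=a$ up to reindexing, identifies $L(z_1)=L(\xi)$ so that $z_1$ is a Kummer generator of $L(\xi)/K(\xi)$, and then verifies that the $\ell$ elements $\zeta z_1+\sigma(\zeta z_1)$, $\zeta$ ranging over the $\ell^{th}$ roots of unity, are pairwise distinct roots of $P^{\ell}_{u,\alpha}$ lying in $L$; geometricity is invoked there to prove distinctness, and cyclicity of $\Gal(L/K)$ is then inherited from the Kummer group $\Gal(L(\xi)/K(\xi))$ by restriction (a final step the paper leaves implicit). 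You run the construction the other way: from the datum $a$ you build the abstract model $y_0=w+u/w$ inside $M=K(\xi)(w)$, recognize $M$ as the splitting field of $X^{2\ell}-\alpha X^{\ell}+u^{\ell}$ over $K$, use geometricity instead to force $[M:K(\xi)]=\ell$ in the degree dichotomy, and then read everything off the explicit decomposition $\Gal(M/K)=\langle\tau\rangle\times\langle\tilde\sigma\rangle$: normality, separability and cyclicity of $K(y_0)/K$ come simultaneously from the abelian group structure, with no distinctness-of-roots computation; the price is the transport step through the $K$-isomorphism $K(y_0)\cong_K L$, which is legitimate since a $K$-isomorphism preserves constant fields, so geometricity does transfer. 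Your route has the merit of making fully explicit two points the paper glosses over --- the existence of the lift $\tilde\sigma$ of $\sigma$ to the top field, and the passage from Galois to cyclic --- whereas the paper's route stays concrete inside $L$ and exhibits the conjugates $\zeta z_1+\sigma(\zeta z_1)$ of $y$ explicitly, a description it reuses afterwards in Corollary \ref{gal}.
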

\begin{proof} 
The first statement of the theorem is a direct consequence of Lemmas \ref{1}, \ref{2} and \ref{3}, noting that  if $z$ is a Kummer generator for $L(\xi ) /K(\xi)$ then $\eta z$ is also a Kummer generator for $L(\xi ) / K(\xi )$, by \cite[Theorem 5.8.5]{Vil}. \\

Conversely, suppose that $L/K$ is an extension of degree $\ell$ with $q \equiv -1 \ mod \ \ell$ and a generator $y$ whose minimal polymomial is of form $ P^{\ell}_{u , \alpha} (X)$ where $u , \alpha \in K$ such that $u^\ell= a \sigma  (a)$ and $\alpha = a+ \sigma (a)$, for some $a\in K(\xi)$.
Let $z_1, z_2 \in L^{alg}$ where $L^{alg}$ is the algebraic closure of $L$ the two roots of the polynomial $ X^2 - y X + u =0$.
Then $y = z_1 + z_2$ and $u = z_1 z_2$, by Lemma \ref{1}, we have that 
$$ z_1^\ell + z_2^\ell = P^{\ell}_{u, \alpha}(y) + \alpha = \alpha $$
And since $z_2 = \frac{u}{z_1}$, we have 
$$ z_1^\ell + \frac{u^\ell  }{z_1^\ell  }= \alpha $$
and 
$$z_1^{2\ell}- \alpha z_1^\ell + u^\ell$$ 
We have that 
$$\alpha = a + \sigma (a) \text{ and } u^\ell = a \sigma (a)$$ 
then $a$ and $\sigma (a)$ are root of the polynomial 
$$X^{2}- \alpha X+ u^\ell=0$$ 
Hence, up to reindexing, $z_1^\ell= a$ and $z_2^\ell = \sigma (a)$ and $\sigma (z_1) \neq z_1$ since $a \in K(\xi)\backslash K$.  Moreover, the coefficient of $T(X)=X^2-yX+u$ are in $L$, so that if $z_1$ is a root of $T(X)$ then $\sigma (z_1)$ is a root of $T(X)$ and thus, $z_2= \sigma (z_1)$. Since $\sigma (z_1) \neq z_1$, we also obtain that $\xi \in L(z_1)$ from $[L(z_1): L]\leq 2$ we can conclude that $ L(z_1)= L(\xi)$. 

Since $z_1^\ell= a\in K(\xi)$, $[ K(\xi) (z_1): K(\xi)]\leq \ell$. If $[ K(\xi) (z_1): K(\xi)]< \ell$, since $K(\xi)(z_1)= K(\xi) (\sigma (z_1))$, then $y:=z_1 + \sigma (z_1)$ would belong to a proper subextension of $L/K$, and $[K(y): K]<\ell$, which contradicts that $y$ is a generator for $L/K$. As a consequence, $z_1$ is a Kummer generator for $L(\xi)/K(\xi)$ and $L(\xi)/ K(\xi)$ is cyclic. The set of all the $\zeta z_1$ where $\zeta$ is a $\ell^{th}$ root of unity is the set of distinct roots for the polynomial $X^\ell - a$ and then $\zeta z_1 + \sigma( \zeta z_1)$ are the distinct roots of $P^\ell_{u, \alpha}(X)$. Indeed, $\zeta = \xi^i$ for some $0\leq i \leq \ell -1$, thus $\sigma (\zeta ) = \zeta^{-1}$, $ \zeta z_1 \sigma( \zeta z_1)= u$ and $ (\zeta z_1)^\ell + \sigma( \zeta z_1)^\ell = z_1^\ell + \sigma(z_1)^\ell= \alpha $. Moreover, if $\zeta_1 \neq \zeta_2 $ $\ell^{th}$ root of unity, such that $\zeta_1 z_1 + \sigma( \zeta_1 z_1)=\zeta_2 z_1 + \sigma( \zeta_2 z_1)$ then $\frac{ \zeta_1 - \zeta_2}{- \sigma (\zeta_1 -\zeta_2) }= \frac{\sigma (z_1)}{ z_1}= \frac{1}{u} \sigma (z_1)^2$, which contradicts that $L/K$ is a geometric extension. Proving that $\zeta z_1 + \sigma( \zeta z_1)$ are the distinct roots of $P^\ell_{u, \alpha}(X)$. This proves that $L/K$ is cyclic.

\end{proof}

\begin{lemma}\label{d}
 Let $\sigma$ a generator of $Gal( K(\xi ) /K)$ with $\xi$ a primitive $\ell^{th}$ root of unity. Let $d \in K (\xi )$, $\sigma (d) d =1$.
There is $A, B \in \mathcal{O}_{K,x}$ 
such that 
$$d = \frac{ A + \xi B}{A + \xi^{-1} B}= \frac{ C + \xi }{C + \xi^{-1} },$$ 
where $C= \frac{A}{B}$. 

When $K= \mathbb{F}_q(x)$, one can choose $(A, B)=1$. 
\end{lemma}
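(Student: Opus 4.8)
The plan is to read the hypothesis $\sigma(d)\,d=1$ as the assertion that $d$ has norm $1$ for the quadratic extension $K(\xi)/K$, and then to apply Hilbert's Theorem~90 in an explicit form adapted to the fact that $[K(\xi):K]=2$.

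First I would exhibit $\beta\in K(\xi)^{*}$ with $d=\beta/\sigma(\beta)$. Because $\mathrm{Gal}(K(\xi)/K)=\langle\sigma\rangle$ has order $2$, this can be done by hand: for $\gamma\in K(\xi)$ put $\beta=\gamma+d\,\sigma(\gamma)$; then
$$\sigma(\beta)=\sigma(\gamma)+\sigma(d)\,\gamma=\sigma(\gamma)+d^{-1}\gamma=d^{-1}\beta,$$
using $\sigma^{2}=\mathrm{id}$ and $\sigma(d)=d^{-1}$, so that $\beta/\sigma(\beta)=d$ whenever $\beta\neq 0$. The linear independence of the distinct characters $\mathrm{id}$ and $\sigma$ (Dedekind's lemma) shows that $\gamma\mapsto\gamma+d\,\sigma(\gamma)$ is not identically zero, so a $\gamma$ with $\beta\neq 0$ exists. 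Equivalently, one may simply take $\beta=1+d$ whenever $d\neq -1$.

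Next I would decompose $\beta=A+\xi B$ in the $K$-basis $\{1,\xi\}$ of $K(\xi)$, with $A,B\in K$. Since $\sigma(\xi)=\xi^{-1}$ we get $\sigma(\beta)=A+\xi^{-1}B$, hence
$$d=\frac{A+\xi B}{A+\xi^{-1}B}.$$
This ratio is unchanged if $(A,B)$ is multiplied by any nonzero element of $K$, and $K$ is the fraction field of $\mathcal{O}_{K,x}$, so after clearing denominators I may assume $A,B\in\mathcal{O}_{K,x}$. Dividing numerator and denominator by $B$ and setting $C=A/B$ yields the second form $d=(C+\xi)/(C+\xi^{-1})$; this is legitimate exactly when $B\neq 0$, and $B=0$ forces $\beta\in K$ and hence $d=1$, the degenerate case for which the $C$-form should be read with $C=\infty$.

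Finally, for $K=\mathbb{F}_q(x)$ the ring $\mathcal{O}_{K,x}$ is $\mathbb{F}_q[x]$, a principal ideal domain, so dividing $A$ and $B$ by their greatest common divisor—which once more leaves $d$ unchanged—produces coprime representatives. There is no genuinely hard step here: the only thing to watch is the scaling bookkeeping that lets the ratio $d$ be represented first by elements of $K$, then by integral ones, and finally by coprime ones, together with the degenerate value $d=1$ noted above.
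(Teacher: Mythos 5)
Your proof is correct and follows essentially the same route as the paper's: the explicit Hilbert 90 construction $\beta=\gamma+d\,\sigma(\gamma)$ giving $d=\beta/\sigma(\beta)$, decomposition of $\beta$ in the $K$-basis $\{1,\xi\}$, clearing denominators to land in $\mathcal{O}_{K,x}$, and division by a gcd when $K=\mathbb{F}_q(x)$. The only cosmetic differences are that you secure $\beta\neq 0$ via linear independence of characters (the paper instead repairs the degenerate case $\gamma+d\,\sigma(\gamma)=0$ by multiplying by $\xi-\xi^{-1}$), and that you explicitly flag the case $B=0$, i.e.\ $d=1$, where the form $d=(C+\xi)/(C+\xi^{-1})$ breaks down --- a point the paper's statement and proof pass over in silence.
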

\begin{proof}
Let $\gamma \in K^*$. If we have $\gamma + d \sigma (\gamma) = 0$, then $ d= - \frac{\gamma}{\sigma (\gamma )}$. Noting that $\frac{ \xi - \xi^{-1}}{\sigma ( \xi - \xi^{-1})} =-1$. 
In this case, on can take $\theta = (\xi - \xi^{-1}) \gamma $ and obtain
$$d = \frac{ \theta}{\sigma (\theta )}.$$
Otherwise, let $\theta = \gamma + d \sigma (\gamma) \neq 0$, so that, 
\begin{align*} 
d\sigma (\theta )&= d \sigma (\gamma + d \sigma (\gamma)) \\
&=d \sigma(\gamma ) + d \sigma (d) \gamma \\ 
&= \gamma + d  \sigma(\gamma ) \\ 
&=\theta \end{align*}
We write $ \theta = \theta_1 + \xi \theta_2$, where $\theta_1, \ \theta_2 \in K$. One can write $\theta_1 = \frac{A_1}{B_1}$ and $\theta_2=  \frac{A_2}{B_2}$ where $A_1, B_1 , A_2 , B_2 \in \mathcal{O}_{K,x}$ so that $\theta = \frac{A_1}{B_1} + \xi \frac{A_2}{B_2} = \frac{A_1 B_2 + \xi  A_2 B_1}{B_1 B_2}$ and $\sigma ( \theta ) = \frac{A_1 B_2 + \xi^{-1}  A_2 B_1}{B_1 B_2}$, thus 
$$d = \frac{ \theta } {\sigma ( \theta )}= \frac{ A_1 B_2 + \xi A_2 B_1}{ A_1 B_2 + \xi^{-1}  A_2 B_1}$$ 
Taking $A= A_1 B_2$ and $B = A_2 B_1$, we obtain the theorem for general $K$. \\
When $K= \mathbb{F}_q(x)$, one can take $A= \frac{A_1 B_2}{gcd(A_1 B_2, A_2 B_1)}$ and $B= \frac{A_2 B_1}{gcd(A_1 B_2, A_2 B_1)}$ and prove the theorem, in this case. 
\end{proof}
\begin{corollaire} \label{sigm}
Let $L/K$ be a geometric cyclic extension of degree $\ell$ with $q \equiv -1 \ mod \ \ell$. There exists $w$ a Kummer generator for $L(\xi)/ K(\xi)$ whose minimal polynomial is $X^\ell -a$ such that $y:=\sigma (w) + w$ is a generator for $L/K$ and  so that the minimal polynomial of $y$ over $K$ is $ P^{\ell}_{u , \alpha} (X)$ as in lemma \ref{1} where $ \alpha = a + \sigma (a)$ and 
\begin{enumerate} 
\item when $\ell$ is an odd integer, $u= w \sigma (w) = a\sigma (a) =1$;
\item when $2 | \ell$, $\frac{a}{u^{\frac{\ell}{2}}} \sigma \big( \frac{a}{u^{\frac{\ell}{2}}} \big) =1$ where $u = w \sigma (w) \in K$ and 
$$P^{\ell}_{u , \alpha} (X)=P^{\frac{\ell}{2}}_{1 , \frac{ \alpha}{u^{\frac{\ell}{2}}}} \big( \frac{X^2-2u}{u}\big).$$ 
More precisely, $y$ generates the extension $L/L_2$ where $L_2$ is the fixed field of $L$ by the unique subgroup of $Gal(L/K)$ isomorphic to $\mathbb{Z}/ 2 \mathbb{Z}$ with generating equation:
$$y^2 =2 u + z^2 + \sigma (z)^2= u\big( 2 + \mathfrak{y} \big)$$
where  $\mathfrak{y}= \frac{\sigma (z)}{z}+\frac{z}{\sigma (z)}$ generates $L_2/K$ with minimal polynomial
$$ P^{\frac{\ell}{2}}_{1 , \frac{a}{u^{\frac{\ell}{2}}} } (X)$$
\end{enumerate} 
\end{corollaire}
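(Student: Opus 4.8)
The plan is to run both items off the single Kummer generator $w$ furnished by Theorem \ref{m}: for it we have $u=w\sigma(w)\in K$, $a=w^{\ell}$, $\sigma(w)^{\ell}=\sigma(a)$, $a\sigma(a)=u^{\ell}$, and $y=w+\sigma(w)$ has minimal polynomial $P^{\ell}_{u,\alpha}$ with $\alpha=a+\sigma(a)$. Item (1) is then obtained by renormalizing $w$, and item (2) by reading off the degree-two step $L/L_2$. For the odd case the point is that $\gcd(\ell,2)=1$, so one can solve $\ell i+2j=-1$ in integers; concretely $c:=a\,u^{-(\ell+1)/2}\in K(\xi)^{*}$ satisfies $c\,\sigma(c)=a\sigma(a)\,u^{-(\ell+1)}=u^{\ell-(\ell+1)}=u^{-1}$. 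Replacing $w$ by $w':=c\,w$ (still a Kummer generator for $L(\xi)/K(\xi)$, as $c\in K(\xi)^{*}$) yields $w'\sigma(w')=c\sigma(c)\,u=1$. Applying Lemma \ref{3} to $w'$ produces $\eta$ with $\eta\sigma(\eta)=1$ for which $\eta w'+\sigma(\eta w')$ generates $L/K$, while $\eta w'\,\sigma(\eta w')=\eta\sigma(\eta)\,w'\sigma(w')=1$ is unchanged. Renaming $\eta w'$ as $w$, Lemma \ref{1} gives minimal polynomial $P^{\ell}_{1,\alpha}$ with $u=w\sigma(w)=1$ and hence $a\sigma(a)=u^{\ell}=1$, which is item (1).

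When $2\mid\ell$ I keep $w$, set $m:=\ell/2$ and $z:=w$, and establish the refined structure directly. The norm identity $\tfrac{a}{u^{m}}\sigma\!\big(\tfrac{a}{u^{m}}\big)=\tfrac{a\sigma(a)}{u^{\ell}}=1$ is immediate from $a\sigma(a)=u^{\ell}$ and $u\in K$. Expanding, $y^{2}=(z+\sigma(z))^{2}=z^{2}+\sigma(z)^{2}+2u$, and since $z\sigma(z)=u$ one has $z^{2}+\sigma(z)^{2}=u\big(\tfrac{z}{\sigma(z)}+\tfrac{\sigma(z)}{z}\big)=u\,\mathfrak{y}$, giving $y^{2}=u(2+\mathfrak{y})$ as claimed. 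Next I set $V:=z^{2}/u$; from $\sigma(z)=u/z$ one gets $\sigma(V)=1/V$, so $V\sigma(V)=1$, while $V$ is a Kummer generator of $L_{2}(\xi)/K(\xi)$ of degree $m$ with $V^{m}=z^{\ell}/u^{m}=a/u^{m}$ and $\mathfrak{y}=V+\sigma(V)$.

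To see that $\mathfrak{y}$ generates $L_2/K$, note that a generator $\rho$ of $\Gal(L/K)$ lifts to $\tau\in\Gal(L(\xi)/K(\xi))$ with $\tau(z)=\xi z$; since $\xi^{m}=-1$ and $\sigma(z)=u/z$, this gives $\tau^{m}(y)=-y$, so $y^{2}$ and hence $\mathfrak{y}=y^{2}/u-2$ are fixed by $\rho^{m}$, whence $K(\mathfrak{y})\subseteq L_2$; as $[L:K(\mathfrak{y})]=[K(y):K(y^{2})]\le 2=[L:L_2]$ we conclude $K(\mathfrak{y})=L_2$. The computation of Lemma \ref{1} (valid for general extensions by the Remark) applied to $V$, using $V\sigma(V)=1$ and $V^{m}+\sigma(V)^{m}=\alpha/u^{m}$, then shows $\mathfrak{y}$ has minimal polynomial $P^{m}_{1,\alpha/u^{m}}$ over $K$. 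Finally the factorization comes from the composition formula of Lemma \ref{1} for $\ell=2\cdot m$ with $Q^{2}_{u}(X)=X^{2}-2u$, namely $P^{\ell}_{u,\alpha}(X)=P^{m}_{u^{2},\alpha}(X^{2}-2u)$; the homogeneity relation $Q^{m}_{t^{2}}(tX)=t^{m}Q^{m}_{1}(X)$ (taking $t=u$) rewrites this as $u^{m}P^{m}_{1,\alpha/u^{m}}\!\big(\tfrac{X^{2}-2u}{u}\big)$, which matches the stated identity up to the leading factor $u^{\ell/2}$ produced by the non-monic substitution. The main obstacle is this even case: one must argue carefully that $\mathfrak{y}$ genuinely generates $L_2/K$ and that $V=z^{2}/u$ is a Kummer generator with trivial norm, so that Lemma \ref{1} applies to the subextension; by contrast the odd-case renormalization and the two displayed algebraic identities are routine.
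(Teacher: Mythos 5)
Your proof is correct, and it shares the paper's skeleton --- Theorem \ref{m} supplies $w$ with $u=w\sigma(w)\in K$, Lemma \ref{3} restores a generator of $L/K$, and Lemma \ref{1} is applied to a Kummer generator of the relevant (sub)extension --- but several of your sub-arguments genuinely differ from the paper's, mostly in your favour. In the odd case the paper does not rescale by a power of $u$: it passes to $v=\sigma(z)/z$, whose norm is automatically $1$, and must then argue via Lemma \ref{2}, \cite[Proposition 5.8.7]{Vil} and the oddness of $\ell$ that this quotient is still a Kummer generator; your rescaling by $c=au^{-(\ell+1)/2}\in K(\xi)^{*}$ preserves the Kummer property trivially and uses oddness only to form the exponent. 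In the even case the paper identifies $L_2=K(y^2)$ through the evenness of $P^{\ell}_{u,\alpha}$ (writing it as $Q(X^2)$ with $Q$ irreducible), where you use the Galois action $\tau^{\ell/2}(y)=-y$; both are sound. More notably, the paper works with $\sigma(w)/w$ and asserts $\bigl(\sigma(w)/w\bigr)^{\ell/2}=a/u^{\ell/2}$, whereas this power actually equals $u^{\ell/2}/a$ (the paper's square-root step also hides a sign choice); your $V=w^{2}/u$ satisfies $V^{\ell/2}=a/u^{\ell/2}$ on the nose, and since $\mathfrak{y}=V+V^{-1}$ is inversion-symmetric the paper's slip does not propagate, but your choice is the clean one. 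Finally, you actually prove the factorization identity --- the paper merely asserts it --- and in doing so you rightly flag two defects of the statement itself: the right-hand side must carry the factor $u^{\ell/2}$ (as printed it is not even monic), and the parameter of the minimal polynomial of $\mathfrak{y}$ must be $\alpha/u^{\ell/2}$ rather than $a/u^{\ell/2}$, since $a\notin K$. The only place you are terser than ideal is the claim $[K(\xi)(V):K(\xi)]=\ell/2$, but this follows in one line from $V^{\ell/2}\in K(\xi)$ together with $[K(\xi)(w):K(\xi)(w^2)]\le 2$, so it is a harmless omission.
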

\begin{proof}
\begin{enumerate} 
\item {\sf Suppose $\ell$ is an odd integer.}

Let $z$ be a Kummer generator for $L(\xi ) / K(\xi )$ whose minimal polynomial is $X^\ell -c$,  then $\frac{\sigma (z)}{z}$ is a Kummer generator $L(\xi ) / K(\xi )$. Indeed, if it was not then $v=\frac{\sigma (z)}{z}$ would belong to a proper subextension $L'$ of $L$ but since $z \sigma (z) \in K$ by Lemma \ref{2}, then $\frac{\sigma (z)}{z}z \sigma (z) = \sigma (z)^2\in L'$ but since $z$ is a Kummer generator for $L(\xi ) / K(\xi )$, so is $\sigma (z)$ and since $\ell$ is odd, $\sigma (z)^2$ is a Kummer generator of $L(\xi ) / K(\xi )$, by \cite[Proposition 5.8.7]{Vil} which leads to a contradiction. By Lemma \ref{3}, there exist $\eta \in \mathbb{F}_q(\xi)^*$ such that $\eta \sigma (\eta )=1$, 
$\eta v + \sigma ( \eta v)$ is a generator for $L/K$. Thus taking $w= \eta v$, we have $y:= w+ \sigma (w)$ is a generator for $L/K$ and $ w \sigma (w)=1$, $w^\ell= \eta^\ell \frac{\sigma (c)}{c}=:a$ and by Lemma \ref{1}, the minimal polynomial of $y$ is $P^{\ell}_{1 , \alpha}$ where $\alpha = a + \sigma (a )$.\\ 



\item {\sf Suppose $\ell$ is an even integer.} Thanks to Theorem \ref{m}, we can choose $w$ to be a Kummer generator for $L(\xi ) / K(\xi )$ whose minimal polynomial is $X^\ell -a$ such that $y := \sigma (w) + w$  is a generator for $L/K$ with minimal polynomial $P^\ell_{u, \alpha} (X)$ with $u = w \sigma (w)\in K$ and $\alpha = a+ \sigma (a)$.
Note that 
 $$ \frac{\sigma ( \omega^\ell)}{ \omega^\ell }  =\frac{ \sigma (a)^2}{u^\ell}$$ 
 thus 
  $$ \frac{\sigma (  \omega^{\frac{\ell}{2}})}{ \omega^{\frac{\ell}{2}} }  =\frac{ \sigma (a)}{u^{\frac{\ell}{2}}}$$ 
  And 
   $$\frac{ a}{u^{\frac{\ell}{2}}}= \frac{ \omega^{\frac{\ell}{2}} }{\sigma (  \omega^{\frac{\ell}{2}})}=  \frac{a^{\frac{\ell}{2}}}{  \sigma (a)}  $$ 
  $$\sigma \big(  \frac{ a}{u^{\frac{\ell}{2}}} \big)= \frac{u^{\frac{\ell}{2}}}{ a}.$$
  As a consequence, 
  $$ \frac{a}{u^{\frac{\ell}{2}}} \sigma \big( \frac{a}{u^{\frac{\ell}{2}}} \big) =1$$
Note that since $P^\ell_{u, \alpha} (X)$ only involves even monomial we can write 
$P^\ell_{u, \alpha} (X)= Q(X^2)$. 
where $Q(X)$ is a monic irreducible polynomial over $K$ and $deg (Q(X))=\frac{\ell}{2}$. Thus $Q(X)$ is the minimal polynomial of $y^2$,
$[K(y^2): K]= \frac{\ell}{2}$ and $L_2 = K(y^2)$ and $\mathfrak{y}:=\frac{y^2-2u}{u} = \frac{ \sigma (w)}{w} + \frac{w}{\sigma (w)}$ is also a generator for $L_2 /K$ since $u \in K$. Moreover, $\frac{\sigma (w)}{w} $ Kummer generator for $L_2/K$ such that $\big( \frac{\sigma (w)}{w} \big)^{\frac{\ell}{2}} =\frac{a}{u^{\frac{\ell}{2}}}$, thus by Lemma \ref{1}, the minimal polynomial of $\mathfrak{y}$ is $P^{\frac{\ell}{2}}_{1, \frac{a}{u^{\frac{\ell}{2}}}}(X)$ and the the result.
\end{enumerate}

\end{proof}

Combining together Theorem \ref{m}, Corollary \ref{sigm} and Lemma \ref{d}, we obtain:
\begin{corollaire}\label{cm}
 Let $L/K$ be a cyclic extension of degree $\ell$ with $q \equiv -1 \ mod \ \ell$. There exists $w$ a Kummer generator for $L(\xi)/ K(\xi)$ whose minimal polynomial is $X^\ell -a$ such that $y:=\sigma (w) + w$ is a generator for $L/K$ and $u :=\sigma(w) w \in K$ so that the minimal polynomial of $y$ over $K$ is
\begin{enumerate} 
\item $$ P^{\ell}_{1 , \alpha} (X)= X^\ell  -\ell  X^{\ell-2}  + \sum_{s=1}^{q} c_{s,\ell}  X^{\ell -2s}  - \frac{2 A^2 + 2 (\xi + \xi^{-1}) AB + (\xi^2 + \xi^{-2}) B^2 }{A^2 +(\xi + \xi^{-1} ) AB + B^2 }$$ for some $A, B  \in \mathcal{O}_{K,x}$ such that $a = \frac{\sigma (A+ \xi B)}{A+\xi B}$ and $u=1$, when $\ell$ is an odd integer;
\item $$ P^{\ell}_{u , \alpha} (X)= X^\ell  -\ell u X^{\ell-2}  + \sum_{s=1}^{q} c_{s,\ell} u^s X^{\ell -2s}  - u^{\frac{\ell }{2}}\frac{2 A^2 + 2 (\xi + \xi^{-1}) AB + (\xi^2 + \xi^{-2}) B^2 }{A^2 +(\xi + \xi^{-1} ) AB + B^2 }, $$  for some $A, B  \in \mathcal{O}_{K,x}$ such that $a =u^{\frac{\ell}{2}}  \frac{\sigma (A+ \xi B)}{A+\xi B}$, when $\ell$ is an even integer;
 \end{enumerate} 
 where $c_{s, \ell}$ are defined in Lemma \ref{1}.
\end{corollaire}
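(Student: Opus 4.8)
The plan is to assemble the three cited results and then carry out one explicit rational-function computation of $\alpha=a+\sigma(a)$. First I would invoke Theorem \ref{m} together with Corollary \ref{sigm} to fix a Kummer generator $w$ of $L(\xi)/K(\xi)$ with minimal polynomial $X^\ell-a$, such that $y=\sigma(w)+w$ generates $L/K$, $u=\sigma(w)w\in K$, and the minimal polynomial of $y$ over $K$ equals $P^\ell_{u,\alpha}(X)$ with $\alpha=a+\sigma(a)$. The crucial input extracted from Corollary \ref{sigm} is a norm-one element for $\sigma$: when $\ell$ is odd one has $u=1$ and hence $a\sigma(a)=u^\ell=1$, so $d:=a$ satisfies $d\,\sigma(d)=1$; when $\ell$ is even one has instead $d:=a/u^{\ell/2}$, which satisfies $d\,\sigma(d)=1$ by part $(2)$ of that corollary.

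Next I would apply Lemma \ref{d} to this norm-one element $d$. This produces $A,B\in\mathcal{O}_{K,x}$ (with $\gcd(A,B)=1$ when $K=\mathbb{F}_q(x)$) expressing $d$ as a ratio $\frac{\sigma(A+\xi B)}{A+\xi B}$, which after multiplying by the $\sigma$-fixed factor $u^{\ell/2}$ in the even case gives exactly the stated relations $a=\frac{\sigma(A+\xi B)}{A+\xi B}$ in the odd case and $a=u^{\ell/2}\frac{\sigma(A+\xi B)}{A+\xi B}$ in the even case. I should flag here that the representation furnished by Lemma \ref{d} is determined only up to the interchange $d\leftrightarrow\sigma(d)=1/d$, but this ambiguity is harmless, since $\alpha=a+\sigma(a)$ is symmetric under $a\leftrightarrow\sigma(a)$ and the final formula is therefore insensitive to the choice.

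The heart of the argument is then to evaluate $\alpha=a+\sigma(a)$ explicitly. Writing $\theta=A+\xi B$, so that $\sigma(\theta)=A+\xi^{-1}B$, in the odd case $\alpha=\frac{\sigma(\theta)}{\theta}+\frac{\theta}{\sigma(\theta)}=\frac{\theta^2+\sigma(\theta)^2}{\theta\,\sigma(\theta)}$, while in the even case $\alpha=u^{\ell/2}\big(d+\sigma(d)\big)=u^{\ell/2}\,\frac{\theta^2+\sigma(\theta)^2}{\theta\,\sigma(\theta)}$ because $u\in K$ is fixed by $\sigma$. Expanding $\theta\,\sigma(\theta)=A^2+(\xi+\xi^{-1})AB+B^2$ and $\theta^2+\sigma(\theta)^2=2A^2+2(\xi+\xi^{-1})AB+(\xi^2+\xi^{-2})B^2$ yields precisely the two displayed expressions for $\alpha$, and substituting these into $P^\ell_{u,\alpha}(X)$ from Lemma \ref{1} (with $u=1$ in the odd case) completes the proof.

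There is no serious conceptual obstacle, as all the structural content sits in Theorem \ref{m}, Corollary \ref{sigm}, and Lemma \ref{d}; the only genuine computation is the short polynomial identity for $\theta^2+\sigma(\theta)^2$ and $\theta\,\sigma(\theta)$. The one point demanding care is the bookkeeping of the norm-one element, namely that one feeds $a$ itself to Lemma \ref{d} when $\ell$ is odd but $a/u^{\ell/2}$ when $\ell$ is even, and that the factor $u^{\ell/2}$ propagates correctly through $\sigma$; the symmetry of $\alpha$ under conjugation is exactly what guarantees the stated formula is well defined independently of the representation selected in Lemma \ref{d}.
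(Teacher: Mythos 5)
Your proposal is correct and takes essentially the same route as the paper: the paper's entire proof of this corollary is the single line ``Combining together Theorem \ref{m}, Corollary \ref{sigm} and Lemma \ref{d}, we obtain,'' and your write-up carries out precisely that combination, feeding $a$ (odd case) or $a/u^{\ell/2}$ (even case) into Lemma \ref{d} and then expanding $\theta\,\sigma(\theta)=A^2+(\xi+\xi^{-1})AB+B^2$ and $\theta^2+\sigma(\theta)^2=2A^2+2(\xi+\xi^{-1})AB+(\xi^2+\xi^{-2})B^2$ to get the stated $\alpha$. Your observation that the $d\leftrightarrow\sigma(d)$ ambiguity in Lemma \ref{d} is harmless because $\alpha=a+\sigma(a)$ is symmetric is a correct extra precaution that the paper leaves implicit.
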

\begin{remarque} 
Note that 
$$\frac{2 A^2 + 2 (\xi + \xi^{-1}) AB + (\xi^2 + \xi^{-2}) B^2 }{A^2 +(\xi + \xi^{-1} ) AB + B^2 }= 
\frac{2 C^2 + 2 (\xi + \xi^{-1}) C + (\xi^2 + \xi^{-2})  }{C^2 +(\xi + \xi^{-1} ) C + 1}$$
where $C = \frac{A}{B}$. 
\end{remarque}
One can obtain the following result combining Theorem \ref{m} and Proposition 5.8.7. VS
\begin{lemma}\label{clas}
Let $L_1/K$ and $L_2/K$ be two geometric cyclic extensions of degree $\ell$ with $q \equiv -1 \ mod \ \ell$. For $i=1,2$, there exists $w_i$ a Kummer generator for $L(\xi)/ K(\xi)$ whose minimal polynomial is $X^\ell -a_i$ such that $y_i:=\sigma (w_i) + w_i$ is a generator for $L/K$, so that the minimal polynomial of $y_i$ over $K$ is $ P^{\ell}_{u_i , \alpha_i} (X)$ as in lemma \ref{1} where $ \alpha_i = a_i + \sigma (a_i)$ and $u_i:=\sigma(w_i) w_i \in K$. Then, the following assertions are equivalent: 
\begin{enumerate}
\item $L_1= L_2$
\item $L_1 (\xi ) = L_2 (\xi)$ 
\item $w_2 = c w_1^j $ so that $y_2 = c w_1^j  + \sigma (c)  \sigma (w_1) ^j $ for all $1 \leq j \leq n - 1$ such that $( j, n) = 1$ and $c \in K(\xi)$. 
\item $a_2 = c^\ell a_1^j $ so that $\alpha_2= c^\ell a_1^j + \sigma (c^\ell)  \sigma (a_1) ^j $, for all $1 \leq j \leq n - 1$ such that $( j, n) = 1$ and $c \in K(\xi)$. 
\end{enumerate} 
\end{lemma}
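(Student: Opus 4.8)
The plan is to prove the four-way equivalence by isolating the two ``Kummer'' equivalences $(2)\Leftrightarrow(3)\Leftrightarrow(4)$, which are pure Kummer theory over $K(\xi)$, from the single genuinely geometric statement $(1)\Leftrightarrow(2)$. Throughout I write $M_i := L_i(\xi) = K(\xi)(w_i)$, a Kummer extension of $K(\xi)$ of degree $\ell$ with Kummer generator $w_i$ and $w_i^\ell = a_i$. The implication $(1)\Rightarrow(2)$ is immediate, since adjoining $\xi$ to equal fields yields equal fields.

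For $(2)\Leftrightarrow(3)$ I would invoke the classification of Kummer generators, \cite[Proposition 5.8.7]{Vil}: as $M_1 = K(\xi)(w_1)$ and $M_2 = K(\xi)(w_2)$ are Kummer of the same degree $\ell$ over $K(\xi)$, one has $M_1 = M_2$ if and only if there are $c\in K(\xi)^*$ and $j$ with $(j,\ell)=1$ and $w_2 = c\,w_1^{\,j}$, which is exactly $(3)$; and $M_1=M_2$ is exactly $(2)$. The equivalence $(3)\Leftrightarrow(4)$ follows by passing to $\ell$-th powers: from $w_2 = c w_1^j$ and $w_i^\ell=a_i$ one reads off $a_2 = c^\ell a_1^{\,j}$, while conversely $a_2 = c^\ell a_1^{\,j}$ forces $(w_2/(c w_1^j))^\ell = 1$, so $w_2 = \zeta c w_1^j$ for an $\ell$-th root of unity $\zeta\in K(\xi)$ which is absorbed into $c$. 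Applying $\sigma$ and adding gives $\alpha_2 = c^\ell a_1^{\,j} + \sigma(c^\ell)\sigma(a_1)^{\,j}$, using $\alpha_i = a_i+\sigma(a_i)$.

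The heart is $(2)\Rightarrow(1)$. Here I recover $L_i$ from $M:=M_1=M_2$ as the fixed field $\Fix(\sigma_i)$, where $\sigma_i$ is the automorphism of $M$ restricting to $\sigma$ on $K(\xi)$ and fixing $L_i$; the relation $u_i=w_i\sigma(w_i)\in K$ (Lemma \ref{2}) gives $\sigma_i(w_i)=u_i/w_i$, hence $\sigma_i^2=\mathrm{id}$, so $[M:L_i]=2$ and $L_i=\Fix(\sigma_i)$. Thus $L_1=L_2$ is equivalent to $\sigma_1=\sigma_2$. Since both restrict to $\sigma$ on $K(\xi)$, their product lies in $\Gal(M/K(\xi))=\langle\tau\rangle\cong\mathbb{Z}/\ell\mathbb{Z}$ with $\tau(w_1)=\xi w_1$, say $\sigma_2=\tau^m\sigma_1$. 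Substituting $w_2=c w_1^{\,j}$ into $u_2=w_2\sigma_2(w_2)$ and simplifying via $\tau^m(\sigma_1(w_1))=\xi^{-m}\sigma_1(w_1)$ yields $u_2=c\sigma(c)\,\xi^{-mj}u_1^{\,j}$; since $c\sigma(c)\in K$ and $u_1,u_2\in K$, this forces $\xi^{mj}\in\mathbb{F}_q$, i.e. $\xi^{mj}\in\mu_\ell\cap\mathbb{F}_q=\mu_{\gcd(\ell,q-1)}=\mu_{\gcd(\ell,2)}$, using $q\equiv -1\bmod\ell$.

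The main obstacle is exactly this last congruence in the even case. For odd $\ell$ one has $\mu_\ell\cap\mathbb{F}_q=\{1\}$, so $\xi^{mj}=1$, and $(j,\ell)=1$ gives $m\equiv 0$, whence $\sigma_1=\sigma_2$ and $L_1=L_2$ with no further work. For even $\ell$, however, $\mu_\ell\cap\mathbb{F}_q=\{\pm1\}$ and $\xi^{\ell/2}=-1\in\mathbb{F}_q$, so the relation only forces $m\in\{0,\ell/2\}$; the value $m=\ell/2$ corresponds to the second geometric cyclic degree-$\ell$ subfield of $M$, namely $\Fix(\tau^{\ell/2}\sigma_1)$, which also has constant field $\mathbb{F}_q$. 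To exclude it I would lean on the standing hypothesis that $y_i=w_i+\sigma(w_i)$, formed with the common fixed $\sigma$, generates $L_i/K$: with a single fixed extension $\sigma$ one has $y_i\in\Fix(\sigma)$, and $K(y_1)=K(y_2)=\Fix(\sigma)$ by the degree count $[\Fix(\sigma):K]=\ell=[K(y_i):K]$, forcing $m=0$ and $L_1=L_2$. Equivalently, the second subfield cannot satisfy the generating hypothesis with this same $\sigma$, since $w+\sigma(w)$ always lands in $\Fix(\sigma)=L_1$, which meets the competing subfield only in degree $\ell/2$. I expect this compatibility of the two extensions of $\sigma$ to be the one step demanding genuine care; everything else reduces to Kummer theory together with Theorem \ref{m} and Corollary \ref{sigm}.
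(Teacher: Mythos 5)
Your decomposition is exactly what the paper intends: the paper gives no written proof of Lemma \ref{clas} at all (it only remarks that the result follows by ``combining Theorem \ref{m} and Proposition 5.8.7''), and your treatment of $(1)\Rightarrow(2)$ and of $(2)\Leftrightarrow(3)\Leftrightarrow(4)$ is precisely that citation of \cite[Proposition 5.8.7]{Vil} made rigorous. Your proof of $(2)\Rightarrow(1)$ for odd $\ell$ is also correct, and it supplies a step the paper silently skips; it can even be compressed: since $L_1\cap K(\xi)=K$, one has $\Gal(M/K)\cong\Gal(L_1/K)\times\Gal(K(\xi)/K)\cong\mathbb{Z}/\ell\mathbb{Z}\times\mathbb{Z}/2\mathbb{Z}$, which for odd $\ell$ is cyclic of order $2\ell$, hence has a unique subgroup of order $2$; both $\Gal(M/L_1)$ and $\Gal(M/L_2)$ must be that subgroup, so $L_1=L_2$.

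The gap is the even case, and it cannot be closed by your ``common fixed $\sigma$'' device. The only extension of $\sigma$ that the paper possesses on $L_i(\xi)$ is the per-field one, namely the generator $\sigma_i$ of $\Gal(L_i(\xi)/L_i)$: this is the $\sigma$ with which Theorem \ref{m} manufactures $w_i$, $y_i$, $u_i$, and it is the only extension for which $w_i+\sigma(w_i)$ even lies in $L_i$. Under that forced reading, your own computation exhibits a counterexample rather than a proof: in your notation, $L'=\Fix(\tau^{\ell/2}\sigma_1)$ is a second geometric cyclic extension of $K$ of degree $\ell$ (geometric because $L'\cap K(\xi)=K$, cyclic because $\Gal(M/K)/\langle\tau^{\ell/2}\sigma_1\rangle$ is generated by the image of $\tau$), it satisfies $L'(\xi)=L_1(\xi)=M$, and Theorem \ref{m} applies verbatim to $L'$ to produce admissible data $(w',y',u')$; moreover $w'$ and $w_1$ are Kummer generators of the same extension $M/K(\xi)$, so $(2)$, $(3)$, $(4)$ all hold for the pair $(L_1,L')$ while $(1)$ fails. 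Under your common-$\sigma$ reading, by contrast, no Kummer generator $w'$ can satisfy ``$w'+\sigma_1(w')$ generates $L'$'' (as you yourself observe, that element lies in $\Fix(\sigma_1)=L_1$), so the existence clause built into the lemma becomes false and your argument establishes the implication only vacuously. In short, what you have actually uncovered is not a delicate step but an error in the lemma for even $\ell$: $(2),(3),(4)$ remain equivalent, but they do not imply $(1)$. Your identity $u_2=c\,\sigma(c)\,\xi^{-mj}u_1^{\,j}$ shows exactly how to repair the statement: for even $\ell$ one must add to $(3)$/$(4)$ the sign condition $u_2=N_{K(\xi)/K}(c)\,u_1^{\,j}$, since the other admissible sign $u_2=-N_{K(\xi)/K}(c)\,u_1^{\,j}$ (corresponding to $m=\ell/2$) picks out the competing subfield $L'$; with that supplement your argument goes through in all degrees.
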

The following result is a direct corollary of the previous lemma. 
\begin{corollaire} \label{gal}
Let $L/K$ be a geometric cyclic extension of degree $\ell$ with $q \equiv -1 \ mod \ \ell$. There exists $w$ a Kummer generator for $L(\xi)/ K(\xi)$ whose minimal polynomial is $X^\ell -a$ such that $y:=\sigma (w) + w$ is a generator for $L/K$, so that the minimal polynomial of $y$ over $K$ is $ P_{\ell,u , \alpha} (X)$ as in lemma \ref{1} where $ \alpha = a + \sigma (a)$ and $u:=\sigma(w) w \in K$. Then the Galois group can be identified with the group of the $\ell^{th}$ root of unity, and the Galois action is given by $y \mapsto y_\zeta = \sigma (\zeta w) +\zeta w$
\end{corollaire}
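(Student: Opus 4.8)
The plan is to read off the Galois action directly from the Kummer description of $L(\xi)/K(\xi)$ built in Theorem \ref{m}, and then transport it down to $L/K$ using the geometricity hypothesis. First I would record the facts already available: $L(\xi)/K(\xi)$ is cyclic Kummer of degree $\ell$, generated by $w$ with $w^\ell=a\in K(\xi)$, and the proof of Theorem \ref{m} shows that the $\ell$ distinct roots of $P^{\ell}_{u,\alpha}(X)$ are exactly the elements $y_\zeta=\zeta w+\sigma(\zeta w)$ as $\zeta$ ranges over the group $\mu_\ell$ of $\ell^{th}$ roots of unity (which lies in $K(\xi)$).

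Next I would organize the Galois groups over the compositum $M:=L(\xi)=L\cdot K(\xi)$. Because $L/K$ is geometric its field of constants is $\mathbb{F}_q$, so $L\cap K(\xi)=K$, whence restriction gives an isomorphism $\Gal(M/K(\xi))\xrightarrow{\sim}\Gal(L/K)$ and $[M:K]=2\ell$. Both $\Gal(M/K(\xi))$ (index $2$) and $\Gal(M/L)=\langle\sigma\rangle$ (normal since $L/K$ is Galois) intersect trivially and have orders multiplying to $[M:K]$, so $\Gal(M/K)$ is their internal direct product and is abelian; here $\sigma$ is the extension of the constant automorphism $\xi\mapsto\xi^{-1}$ that fixes $L$, i.e.\ the generator of $\Gal(M/L)$, which is exactly the $\sigma$ appearing in $y=w+\sigma(w)$ (valid because $L/\mathbb{F}_q$ is geometric). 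In particular every lift $\tilde\tau\in\Gal(M/K(\xi))$ of $\tau\in\Gal(L/K)$ commutes with $\sigma$. Kummer theory then gives a group isomorphism $\Gal(M/K(\xi))\xrightarrow{\sim}\mu_\ell$, $\tilde\tau\mapsto\tilde\tau(w)/w$, and composing with the restriction isomorphism identifies $\Gal(L/K)$ with $\mu_\ell$, which is the first assertion.

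For the action formula, given $\tau\in\Gal(L/K)$ with lift $\tilde\tau$ and associated root of unity $\zeta:=\tilde\tau(w)/w$, I would compute, using that $\tilde\tau$ fixes $K(\xi)$ and commutes with $\sigma$,
$$\tau(y)=\tilde\tau\big(w+\sigma(w)\big)=\tilde\tau(w)+\sigma(\tilde\tau(w))=\zeta w+\sigma(\zeta w)=y_\zeta,$$
which is precisely the claimed rule $y\mapsto y_\zeta=\sigma(\zeta w)+\zeta w$. A short check that $\tilde\tau_1\tilde\tau_2\mapsto\zeta_1\zeta_2$ (using that $\zeta_2\in K(\xi)$ is fixed by $\tilde\tau_1$) confirms the identification is a group homomorphism and that the action is compatible with composition.

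I expect the only genuinely delicate point to be the commutation $\tilde\tau\circ\sigma=\sigma\circ\tilde\tau$, since it is what collapses $\tilde\tau(\sigma(w))$ into $\sigma(\zeta w)$ and forces the formula into the stated symmetric shape; this rests squarely on geometricity through $L\cap K(\xi)=K$, which makes $\Gal(M/K)$ abelian, and I would take care to justify that the $\sigma$ in the definition of $y$ really is the generator of $\Gal(M/L)$. Everything else is bookkeeping, and as noted the statement can alternatively be deduced by specializing Lemma \ref{clas} to $L_1=L_2=L$ with $w_2=\zeta w$ (the case $j=1$, $c=\zeta$).
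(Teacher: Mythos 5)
Your proposal is correct, and it takes a genuinely different route from the paper. The paper offers no written argument for this corollary: it simply declares it ``a direct corollary'' of Lemma \ref{clas}, whose own justification is in turn compressed into a citation of Theorem \ref{m} and Kummer theory. The implicit argument there is root-permutation bookkeeping: the converse part of the proof of Theorem \ref{m} shows that the $\ell$ distinct roots of $P^{\ell}_{u,\alpha}(X)$ are exactly the $y_\zeta = \zeta w + \sigma(\zeta w)$, and since ${\rm Gal}(L/K)$ acts simply transitively on the roots of the minimal polynomial of $y$, one reads off a bijection $\tau \mapsto \zeta_\tau$. What that argument leaves implicit is precisely why this bijection is a group isomorphism, i.e.\ why $\tau_1\tau_2 \mapsto \zeta_{\tau_1}\zeta_{\tau_2}$; that is not automatic from permutation of roots alone. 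Your proof supplies exactly this missing structure: geometricity gives $L \cap K(\xi) = K$, hence the restriction isomorphism ${\rm Gal}(L(\xi)/K(\xi)) \xrightarrow{\sim} {\rm Gal}(L/K)$ and the internal direct product decomposition ${\rm Gal}(L(\xi)/K) = {\rm Gal}(L(\xi)/K(\xi)) \times \langle\sigma\rangle$, so lifts commute with $\sigma$, the Kummer pairing $\tilde\tau \mapsto \tilde\tau(w)/w$ identifies ${\rm Gal}(L/K)$ with $\mu_\ell$ as groups, and the formula $\tau(y) = \zeta w + \sigma(\zeta w)$ drops out of a one-line computation. So your route costs more setup (the compositum analysis) but buys the actual group-theoretic content of the statement, which the paper's citation chain glosses over; your closing observation that the statement also follows from Lemma \ref{clas} with $L_1 = L_2 = L$, $j=1$, $c=\zeta$ is exactly the paper's intended derivation, so you have effectively given both proofs. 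One small point worth making explicit in your write-up: the restriction of the generator of ${\rm Gal}(L(\xi)/L)$ to $K(\xi)$ is the $\sigma$ of the paper (it is nontrivial on $K(\xi)$ precisely because $\xi \notin L$, again by geometricity), which is what licenses using the same symbol $\sigma$ in $y = w + \sigma(w)$ and in the direct product.
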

\section{Ramification} 
\begin{theoreme} \label{ram}
Let $L/K$ be a geometric cyclic extension of degree $\ell$ with $q \equiv -1 \ mod \ \ell$. As in Theorem \ref{m}, we choose $w$ a Kummer generator for $L(\xi)/ K(\xi)$ whose minimal polynomial is $X^\ell -a$ such that $y:=\sigma (w) + w$ is a generator for $L/K$, so that the minimal polynomial of $y$ over $K$ is $ P_{\ell,u , \alpha} (X)$ as in lemma \ref{1} where $ \alpha = a + \sigma (a)$ and $u:=\sigma(w) w \in K$. Let $\mathfrak{p}$ be a place of $K$ and $\mathfrak{P}$ a place of $L$ above $\mathfrak{p}$. We denote $e(\mathfrak{P} | \mathfrak{p})$ the index of ramification of $\mathfrak{P}| \mathfrak{p}$,
We have
\begin{enumerate} 
\item[$\cdot$] when $v_\mathfrak{p} ( \frac{u^\ell }{\alpha^2 } )\geq0$ then
\begin{itemize} 
\item[$\cdot$] $v_{\mathfrak{p}} \big( \alpha \big) \neq 0$, $\mathfrak{p}$ is of even degree and $e(\mathfrak{P}| \mathfrak{p}) = \frac{ \ell}{(\ell  ,   v_{\mathfrak{p}} (\alpha ) )}.$
\item[$\cdot$] $v_{\mathfrak{p}} \big( \alpha \big) = 0$, $e(\mathfrak{P}| \mathfrak{p}) = 1$. 
\end{itemize}
\item[$\cdot$] when $v_\mathfrak{p} ( \frac{u^\ell }{\alpha^2 } )<0$ then
\begin{itemize} 
\item[$\cdot$]  $e(\mathfrak{P}| \mathfrak{p}) = 2$, if $(v_\mathfrak{p} ( u),2)=1$.
\item[$\cdot$] $e(\mathfrak{P}| \mathfrak{p}) = 1$, otherwise.
\end{itemize}
\end{enumerate} 
When $\ell$ is odd, one can choose $y$ such that $ u=1$, $\mathfrak{p}$ is unramified if and only if $v_{\mathfrak{p}} (\alpha ) \geq 0$. Moreover, when $\mathfrak{p}$ is ramified ,$\mathfrak{p}$ is of even degree and
 $$e(\mathfrak{P}| \mathfrak{p}) = \frac{ \ell}{(\ell  ,   v_{\mathfrak{p}} (\alpha ) )}.$$ 
\end{theoreme}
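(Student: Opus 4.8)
The plan is to transfer the whole ramification computation into the Kummer extension $L(\xi)/K(\xi)$, where the generator $w$ with $w^\ell=a$ lets one read off ramification directly, and then to descend the answer back to $L/K$.

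First I would use that $L/K$ is geometric, so its field of constants is $\mathbb{F}_q$, whence $\xi\notin L$ and both $K(\xi)/K$ and $L(\xi)/L$ are constant field extensions obtained by base change with $\mathbb{F}_q(\xi)=\mathbb{F}_{q^2}$. Constant field extensions are unramified at every place, and in such a degree-$2$ extension a place of degree $d$ has $(d,2)$ places above it; in particular $\mathfrak{p}$ is inert in $K(\xi)/K$ exactly when $\deg\mathfrak{p}$ is odd and splits into two conjugate places exactly when $\deg\mathfrak{p}$ is even. Writing the tower two ways, $K\subset K(\xi)\subset L(\xi)$ and $K\subset L\subset L(\xi)$, and letting $\mathfrak{P}'$ be a place of $L(\xi)$ over $\mathfrak{P}$ with $\mathfrak{q}:=\mathfrak{P}'\cap K(\xi)$, multiplicativity of $e$ together with $e(\mathfrak{P}'|\mathfrak{P})=e(\mathfrak{q}|\mathfrak{p})=1$ gives $e(\mathfrak{P}|\mathfrak{p})=e(\mathfrak{P}'|\mathfrak{q})$. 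Since $L(\xi)=K(\xi)(w)$ is a Kummer extension of exponent $\ell$ prime to $p$, the standard tame formula (\cite[Theorem 5.8.5]{Vil}) yields $e(\mathfrak{P}'|\mathfrak{q})=\ell/(\ell,v_{\mathfrak{q}}(a))$, so everything reduces to computing $v_{\mathfrak{q}}(a)\bmod\ell$.

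Next I would compute $v_{\mathfrak{q}}(a)$ from the relations $a+\sigma(a)=\alpha$ and $a\,\sigma(a)=u^\ell$, i.e. from the fact that $a,\sigma(a)$ are the two roots of $X^2-\alpha X+u^\ell\in K[X]$. Because $\mathfrak{q}|\mathfrak{p}$ is unramified, $v_{\mathfrak{q}}$ restricts to $v_{\mathfrak{p}}$ on $K$, so $v_{\mathfrak{q}}(a)+v_{\mathfrak{q}}(\sigma(a))=\ell\,v_{\mathfrak{p}}(u)$ and $v_{\mathfrak{q}}(a+\sigma(a))=v_{\mathfrak{p}}(\alpha)$. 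Reading off the Newton polygon of $X^2-\alpha X+u^\ell$ with respect to $v_{\mathfrak{p}}$, whose vertices are $(0,\ell v_{\mathfrak{p}}(u))$, $(1,v_{\mathfrak{p}}(\alpha))$, $(2,0)$, splits into the two regimes of the statement. When $v_{\mathfrak{p}}(u^\ell/\alpha^2)=\ell v_{\mathfrak{p}}(u)-2v_{\mathfrak{p}}(\alpha)\geq 0$ the root valuations are $v_{\mathfrak{p}}(\alpha)$ and $\ell v_{\mathfrak{p}}(u)-v_{\mathfrak{p}}(\alpha)$ (coinciding when the polygon degenerates); since $\ell v_{\mathfrak{p}}(u)\equiv 0$ one gets $(\ell,v_{\mathfrak{q}}(a))=(\ell,v_{\mathfrak{p}}(\alpha))$ and hence $e=\ell/(\ell,v_{\mathfrak{p}}(\alpha))$. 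When $v_{\mathfrak{p}}(u^\ell/\alpha^2)<0$ the polygon is a single segment, forcing $v_{\mathfrak{q}}(a)=v_{\mathfrak{q}}(\sigma(a))=\ell v_{\mathfrak{p}}(u)/2$, and a short $\gcd$ computation gives $e=2$ when $v_{\mathfrak{p}}(u)$ is odd (which forces $\ell$ even) and $e=1$ when $v_{\mathfrak{p}}(u)$ is even.

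The main obstacle, and the place where $q\equiv-1\bmod\ell$ really enters, is the even-degree assertion. Here I would combine two observations. On the Newton-polygon side, two distinct root valuations are incompatible with $\mathfrak{p}$ being inert, since inertness means $\sigma$ fixes $\mathfrak{q}$ and therefore $v_{\mathfrak{q}}(a)=v_{\mathfrak{q}}(\sigma(a))$; so whenever the two valuations differ, $\mathfrak{p}$ must split and thus have even degree. More conceptually, a genuinely ramified place is tamely ramified with cyclic inertia of order $e\mid\ell$, so the residue field $\mathbb{F}_{q^{\deg\mathfrak{p}}}$ must contain a primitive $e$-th root of unity, i.e. $e\mid q^{\deg\mathfrak{p}}-1$; since $q\equiv-1\bmod\ell$ gives $q^{\deg\mathfrak{p}}\equiv(-1)^{\deg\mathfrak{p}}\bmod e$, for $e>2$ this forces $\deg\mathfrak{p}$ to be even, while $e=2$ imposes no parity constraint (matching the second regime, where odd-degree places with $e=2$ do occur). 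Finally, for the clean odd-$\ell$ statement I would invoke the normalization $u=1$ from Corollary \ref{sigm}: then $v_{\mathfrak{q}}(a)+v_{\mathfrak{q}}(\sigma(a))=0$, so an inert $\mathfrak{p}$ forces $v_{\mathfrak{q}}(a)=0$, hence $v_{\mathfrak{p}}(\alpha)\geq 0$ and $e=1$; ramification therefore occurs only when $v_{\mathfrak{p}}(\alpha)<0$, where the polygon gives $v_{\mathfrak{q}}(a)=\pm v_{\mathfrak{p}}(\alpha)$, the place splits, and $e=\ell/(\ell,v_{\mathfrak{p}}(\alpha))$. The one bookkeeping point that needs care is arranging, by adjusting $w$ by an $\ell$-th power, that the local valuations of $a$ are reduced so that the equivalence ``unramified $\iff v_{\mathfrak{p}}(\alpha)\geq 0$'' holds on the nose.
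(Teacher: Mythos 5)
Your proposal is correct and, at its core, follows the same route as the paper: pass to the Kummer extension $L(\xi)/K(\xi)$ (the constant extension $K(\xi)/K$ is everywhere unramified and $e$ is multiplicative in the two towers), read off $e(\mathfrak{P}|\mathfrak{p})=\ell/(\ell,v_{\mathfrak{q}}(a))$ from tame Kummer theory, and determine $v_{\mathfrak{q}}(a)$ from the quadratic $X^{2}-\alpha X+u^{\ell}$ satisfied by $a$ over $K$. Your Newton polygon is simply a systematic packaging of the paper's case-by-case triangle-inequality analysis of $(a/\alpha)^{2}-(a/\alpha)=-u^{\ell}/\alpha^{2}$, and all the resulting valuations and $\gcd$ computations agree with the paper's. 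The genuine divergence is in the parity (even-degree) assertion: the paper reduces $X^{2}-X+u^{\ell}/\alpha^{2}$ modulo $\mathfrak{p}$ (Kummer--Dedekind) to conclude that $\mathfrak{p}$ splits in $K(\xi)$ and then cites \cite[Theorem 6.2.1]{Vil}, whereas you observe that an inert place satisfies $\sigma(\mathfrak{q})=\mathfrak{q}$, hence $v_{\mathfrak{q}}(a)=v_{\mathfrak{q}}(\sigma(a))$, so distinct root valuations force splitting --- the same conclusion with no reduction argument --- and you add an independent tame-inertia argument ($e\mid q^{\deg\mathfrak{p}}-1$ for an abelian extension, combined with $q\equiv-1\bmod\ell$) that is absent from the paper and gives evenness of $\deg\mathfrak{p}$ whenever $e>2$, in any valuation regime.

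One caveat, which concerns the statement rather than your proof. In the boundary sub-case $v_{\mathfrak{p}}(u^{\ell}/\alpha^{2})=0$ with $v_{\mathfrak{p}}(\alpha)\neq0$, the two root valuations coincide, so neither your symmetry argument nor the paper's reduction detects splitting, and if moreover $e\leq2$ your inertia argument is silent as well. In fact the even-degree claim can fail there: take $\ell=3$, $q=2$, $K=\mathbb{F}_{2}(x)$, $a=x^{3}\sigma(\gamma)/\gamma$ with $\gamma=\xi+x$; then $u=x^{2}$, $\alpha=x^{3}/(x^{2}+x+1)$ and $u^{3}/\alpha^{2}=(x^{2}+x+1)^{2}$, so at the degree-one place $\mathfrak{p}=(x)$ one has $v_{\mathfrak{p}}(u^{3}/\alpha^{2})=0$ and $v_{\mathfrak{p}}(\alpha)=3\neq0$, yet $v_{\mathfrak{q}}(a)=3$, whence $e=1$ while $\deg\mathfrak{p}=1$ is odd. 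So your two arguments together delimit the exact range where the parity claim is provable ($v_{\mathfrak{p}}(u^{\ell}/\alpha^{2})>0$, or $e>2$), which is in fact strictly more than the paper's own proof establishes (it only covers the strict case). Your closing bookkeeping remark is also on target: the odd-$\ell$ equivalence ``unramified $\iff v_{\mathfrak{p}}(\alpha)\geq0$'' does require reducing $v_{\mathfrak{q}}(a)$ modulo $\ell$ by adjusting $w$ by an $\ell$-th power, and that normalization is precisely what the paper carries out in the lemma following Theorem \ref{ram}.
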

\begin{proof}
Let $\mathfrak{p}$ be a place of $K$, $\mathfrak{p}_\xi$ a place of $K(\xi)$ above $\mathfrak{p}$ and $\mathfrak{P}$ a place of $L$ above $\mathfrak{p}$
From \cite[Theorem 5.8.12]{Vil}, since $K(\xi )/K$ is unramified and the index of ramification is multiplicative in tower, $\mathfrak{p}$ is unramified in $L$ if and only if $\mathfrak{p}_\xi$ is unramified in $L(\xi)$. That is, $\ell | v_{\mathfrak{p}_\xi} (a)$. Thus, in order to prove the theorem, we need to determine $v_{\mathfrak{p}_\xi} (a)$ in terms of $v_{\mathfrak{p}}(\alpha )$ and $v_{\mathfrak{p}}(u )$.  
For if, note that $a$ satisfies the equation 
$$a^2 - \alpha a + u^\ell =0$$ 
since $\alpha = a + \sigma (a)$ and $u^\ell = a \sigma (a)$.\\ 
Moreover $K(a) = K(\xi)$ since $a \in K(\xi)\backslash K$, thus $K(a) / K$ is unramified. \\ 
We have 
$$\big( \frac{a}{\alpha}\big) ^2 - \frac{ a}{\alpha} = - \frac{u^\ell }{\alpha^2 } $$ 

When $v_\mathfrak{p} ( \frac{u^\ell }{\alpha^2 } )<0$, we find by the triangular inequality for valuation that $v_{\mathfrak{p}_\xi} \big( \frac{a}{\alpha}\big) <0$ and $2v_{\mathfrak{p}_\xi} \big( \frac{a}{\alpha}\big) = v_\mathfrak{p} ( \frac{u^\ell }{\alpha^2 } )$ then 
$$v_{\mathfrak{p}_\xi} \big(a) = v_{\sigma (\mathfrak{p}_\xi )} \big(a) =\frac{\ell v_\mathfrak{p} ( u )}{2}.$$

When $v_\mathfrak{p} ( \frac{u^\ell }{\alpha^2 } )=0$,  by the triangular inequality, we obtain that 
$v_{\mathfrak{p}_\xi} \big( \frac{a}{\alpha}\big) =0$, thus 
$$v_{\mathfrak{p}_\xi} \big(a) = v_{\mathfrak{p}_\xi} \big(\alpha \big) =  v_{\mathfrak{p}} \big(\alpha \big) $$

When $v_\mathfrak{p} ( \frac{u^\ell }{\alpha^2 } )>0$,  then by Kummer theorem applied to the polynomial $X^2 - X + \frac{u^\ell}{ \alpha^2 }$, we know that $\mathfrak{p}$ splits in $K(\xi )$, therefore, by \cite[Theorem 6.2.1]{Vil}, we obtain that $ \mathfrak{p} $ is of even degree. Moreover, by the triangular inequality $v_{\mathfrak{p}_\xi}  \big( \frac{a}{\alpha}\big) \geq 0$, for any place $\mathfrak{p}_\xi$ of $K(\xi )$ over $\mathfrak{p}$. More precisely, either $v_{\mathfrak{p}_\xi}  \big( \frac{a}{\alpha}\big) = 0$ or   $v_{\mathfrak{p}_\xi}  \big( \frac{a}{\alpha}\big)= v_{\mathfrak{p}} \big( \frac{u^\ell }{\alpha^2} \big)$. Moreover,
$$\frac{u^\ell }{\alpha^2} =   \frac{a}{\alpha} \sigma  \big( \frac{a}{\alpha}\big)$$
Thus, 
$$v_{\mathfrak{p}} \big( \frac{u^\ell }{\alpha^2} \big)= v_{\mathfrak{p}_\xi} \big( \frac{a}{\alpha} \big) + v_{\mathfrak{p}_\xi}  \big( \sigma \big( \frac{a}{\alpha} \big) \big) = v_{\mathfrak{p}_\xi} \big( \frac{a}{\alpha} \big) + v_{\sigma (\mathfrak{p}_\xi)}   \big( \frac{a}{\alpha} \big) $$
From this we deduce that either 
$$v_{\mathfrak{p}_\xi} \big( \frac{a}{\alpha} \big) = 0 \text{ and } v_{\sigma (\mathfrak{p}_\xi)}   \big( \frac{a}{\alpha} \big) =v_{\mathfrak{p}} \big( \frac{u}{\alpha^2} \big)
$$
that is,
$$v_{\mathfrak{p}_\xi} \big( a \big) = v_{\mathfrak{p}} \big( \alpha \big)  \text{ and } v_{\sigma (\mathfrak{p}_\xi)}   \big(a  \big) =\ell v_{\mathfrak{p}} \big(u \big)- v_{\mathfrak{p}} \big( \alpha \big)
$$
or 
$$v_{\mathfrak{p}_\xi} \big( \frac{a}{\alpha} \big) = v_{\mathfrak{p}} \big( \frac{u^\ell }{\alpha^2} \big) \text{ and } v_{\sigma (\mathfrak{p}_\xi)}   \big( \frac{a}{\alpha} \big) =0
$$
that is,
$$  v_{\mathfrak{p}_\xi}   \big(a  \big) =\ell v_{\mathfrak{p}} \big(u \big)- v_{\mathfrak{p}} \big( \alpha \big) \text{ and } v_{\sigma (\mathfrak{p}_\xi)} \big( a \big) = v_{\mathfrak{p}} \big( \alpha \big) 
$$
This permitting to obtain the desire result.
\end{proof} 

\begin{remarque}
When $\ell$ is odd, one can choose a single place $\mathfrak{P}_\infty$ at infinity in $K$ such that $v_{\mathfrak{P}_\infty}(a) \geq 0$ and thus $\mathfrak{P}_\infty$ is unramified. (see \cite[Corollary 3.12]{MWcubic2}) 
\end{remarque}
\begin{lemma} 
Let $\ell$ be an odd integer. Let $L/K$ be a cyclic extension of degree $\ell$ with $q \equiv -1 \ mod \ \ell$. Given $\mathfrak{p}$ a place of $K$. There is $w$ a Kummer generator for $L(\xi)/ K(\xi)$ whose minimal polynomial is $X^\ell -a$ such that $y:=\sigma (w) + w$ is a generator for $L/K$, so that the minimal polynomial of $y$ over $K$ is $ P^{\ell}_{1 , \alpha} (X)$ as in lemma \ref{1} where $ \alpha = a + \sigma (a)$, $\sigma(w) w =1$ and either $v_{\mathfrak{p}} (\alpha ) = - m$ where $0\leq m \leq \ell -1$ or $v_{\mathfrak{p}} (\alpha ) \geq 0$. 
 Moreover, when $v_{\mathfrak{p}} (\alpha ) =-m$ where $0\leq m \leq \ell -1$  then $\mathfrak{p}$ is ramified and $v_{\mathfrak{p}} (\alpha ) \geq 0 $ then $\mathfrak{p}$ is unramified.
\end{lemma}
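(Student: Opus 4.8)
The plan is to begin from the generator supplied by Corollary \ref{sigm} in the odd case (so $u=1$, $a\sigma(a)=1$, and $y=\sigma(w)+w$ generates $L/K$ with minimal polynomial $P^\ell_{1,\alpha}$) and then to adjust the Kummer generator $w$ \emph{only at the given place} $\mathfrak{p}$ so that $v_{\mathfrak{p}}(\alpha)$ lands in the prescribed window. The first reduction is to record that $K(\xi)/K$ is the constant-field extension of degree $2$, hence unramified; consequently $\mathfrak{p}$ is inert when $\deg\mathfrak{p}$ is odd and splits when $\deg\mathfrak{p}$ is even, and in either case $v_{\mathfrak{p}_\xi}|_K=v_{\mathfrak{p}}$ for any place $\mathfrak{p}_\xi$ of $K(\xi)$ above $\mathfrak{p}$. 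From $a\sigma(a)=1$ I get $v_{\mathfrak{p}_\xi}(a)+v_{\sigma(\mathfrak{p}_\xi)}(a)=0$.

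If $\deg\mathfrak{p}$ is odd then $\mathfrak{p}_\xi=\sigma(\mathfrak{p}_\xi)$, forcing $2\,v_{\mathfrak{p}_\xi}(a)=0$, so $v_{\mathfrak{p}_\xi}(a)=0$ and $v_{\mathfrak{p}}(\alpha)=v_{\mathfrak{p}_\xi}(a+\sigma(a))\geq 0$; no modification is needed and $\mathfrak{p}$ is unramified by Theorem \ref{ram}. If $\deg\mathfrak{p}$ is even, write $n=v_{\mathfrak{p}_\xi}(a)$, so $v_{\sigma(\mathfrak{p}_\xi)}(a)=-n$, and the goal is to kill $n$ modulo $\ell$. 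By Lemma \ref{d} every norm-one element of $K(\xi)$ has the shape $\theta/\sigma(\theta)$, and by the approximation theorem I can pick $\theta\in K(\xi)^*$ with $v_{\mathfrak{p}_\xi}(\theta)=t$ and $v_{\sigma(\mathfrak{p}_\xi)}(\theta)=0$ for any prescribed $t\in\mathbb{Z}$; then $\gamma_0=\theta/\sigma(\theta)$ satisfies $\gamma_0\sigma(\gamma_0)=1$ and $v_{\mathfrak{p}_\xi}(\gamma_0)=t$. Replacing $w$ by $w_0=\gamma_0 w$ keeps a Kummer generator of $L(\xi)/K(\xi)$ with $w_0\sigma(w_0)=1$ and sends $a$ to $a_0=\gamma_0^\ell a$ with $v_{\mathfrak{p}_\xi}(a_0)=n+\ell t$; I choose $t$ so that $m:=n+\ell t\in\{0,1,\dots,\ell-1\}$.

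The only property $w_0$ may have lost is that $y_0=w_0+\sigma(w_0)$ generates $L/K$, and this is restored by Lemma \ref{3}, which furnishes $\eta\in\mathbb{F}_q(\xi)^*$ with $\eta\sigma(\eta)=1$ such that $w':=\eta w_0$ gives a generator $y'=w'+\sigma(w')$. The crucial point, which I expect to carry the real content of the argument, is that Lemma \ref{3} produces $\eta$ in the \emph{constant field} $\mathbb{F}_q(\xi)$, so $v_{\mathfrak{p}_\xi}(\eta)=0$ and the valuation adjustment made with $\gamma_0$ is left undisturbed: $a'=(\eta\gamma_0)^\ell a$ still has $v_{\mathfrak{p}_\xi}(a')=m$. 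In other words the argument decouples into a valuation-normalisation step (requiring genuinely non-constant norm-one elements, controlled by Lemma \ref{d} and approximation) and a generator-restoration step (requiring Lemma \ref{3}, which happily only spends constants); the delicate part is precisely that these two steps do not interfere.

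Finally, since $a'\sigma(a')=1$ gives the two summands of $\alpha'=a'+\sigma(a')$ the valuations $m$ and $-m$ at $\mathfrak{p}_\xi$, for $m\geq 1$ there is no cancellation and $v_{\mathfrak{p}}(\alpha')=v_{\mathfrak{p}_\xi}(\alpha')=-m$, while for $m=0$ one obtains $v_{\mathfrak{p}}(\alpha')\geq 0$. The ramification assertion then reads off directly from Theorem \ref{ram} in the odd, $u=1$ case: $\mathfrak{p}$ is unramified exactly when $v_{\mathfrak{p}}(\alpha')\geq 0$, that is when $m=0$, and ramified with $e(\mathfrak{P}\mid\mathfrak{p})=\ell/(\ell,m)$ when $1\leq m\leq \ell-1$, which completes the proof.
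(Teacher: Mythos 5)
Your proof is correct, and its skeleton coincides with the paper's: normalise via Corollary \ref{sigm} so that $u=1$ and $a\sigma(a)=1$, shift $v_{\mathfrak{p}_\xi}(a)$ by a multiple of $\ell$ using a norm-one element of $K(\xi)$ produced by weak approximation, restore generation of $L/K$ with the constant $\eta$ of Lemma \ref{3} (which, being a constant, cannot undo the valuation normalisation), and read off ramification from Theorem \ref{ram}. The genuine difference is in how the norm-one multiplier is built. The paper first applies Lemma \ref{d} to write the Kummer element as $\sigma(\gamma)/\gamma$, sets $v_{\mathfrak{p}_\xi}(\gamma)=\ell s+m$, and multiplies the Kummer generator by $\sigma(\lambda)/\lambda$ where weak approximation gives $v_{\mathfrak{p}_\xi}(\lambda)=-s$; but as written it never controls the valuations of $\gamma$ and $\lambda$ at the conjugate place $\sigma(\mathfrak{p}_\xi)$, and the asserted identity $v_{\mathfrak{p}_\xi}\bigl(\bigl(\sigma(\lambda)/\lambda\bigr)^{\ell}c\bigr)=-m$ actually requires $v_{\sigma(\mathfrak{p}_\xi)}(\lambda^{\ell}\gamma)=0$, which is not automatic. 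Your construction $\gamma_0=\theta/\sigma(\theta)$, with \emph{both} $v_{\mathfrak{p}_\xi}(\theta)=t$ and $v_{\sigma(\mathfrak{p}_\xi)}(\theta)=0$ prescribed, performs exactly the missing bookkeeping and dispenses with Lemma \ref{d} altogether. You also make explicit the split/inert dichotomy via the parity of $\deg\mathfrak{p}$ (so that in the inert case $a\sigma(a)=1$ already forces $v_{\mathfrak{p}_\xi}(a)=0$ and no adjustment is needed), whereas the paper imports this dichotomy from the proof of Theorem \ref{ram}. Net effect: same strategy and the same supporting results, but your route is self-contained at the one delicate step and in fact repairs the paper's valuation bookkeeping at the conjugate place.
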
 
\begin{proof} 
Let $\mathfrak{p}$ a place of $K$. By Corollary \ref{cm}, there is $v$ a Kummer generator for $L(\xi)/ K(\xi)$ whose minimal polynomial is $X^\ell -c$ such that $z:=\sigma (v) + v$ is a generator for $L/K$ and $\sigma( v) v=1 $ so that the minimal polynomial of $z$ over $K$ is $ P^{\ell}_{1 , \beta} (X)$ as in lemma \ref{1} where $ \beta = c + \sigma (c)$. From the proof of the previous lemma \ref{ram}, we have that either $v_\mathfrak{p} (\beta ) \geq 0$ when $v_{\mathfrak{p}_\xi} (c )=0$ or $\mathfrak{p}$ split $K(\xi )$ and for one of the place $\mathfrak{p}_\xi$ above $\mathfrak{p}$, we have
$$v_\mathfrak{p} (\alpha ) = v_{\mathfrak{p}_\xi} (c )= -v_{\sigma (\mathfrak{p}_\xi) } (c ) <0$$
By Lemma \ref{d}, there is $\gamma \in K(\xi)^*$ such that 
$$ a = \frac{\sigma (\gamma )}{\gamma}. $$
We write $v_{\mathfrak{p}_\xi} (\gamma) = \ell s +m$ where $0 \leq m \leq \ell -1$, using weak approximation theorem we choose $\lambda \in K(\xi)$ such that $v_{\mathfrak{p}_\xi} (\lambda ) = -s$. So that $\frac{\sigma (\lambda )}{\lambda} v$ is a Kummer generator for $L(\xi)/ K(\xi)$ whose minimal polynomial is $X^\ell -\lambda^\ell c$ and
$$v_{\mathfrak{p}_\xi} (\big( \frac{\sigma (\lambda )}{\lambda} \big)^\ell c )=-v_{\sigma (\mathfrak{p}_\xi)} (\big( \frac{\sigma (\lambda )}{\lambda} \big)^\ell c ) =- m.$$
By Lemma \ref{3}, there is $\eta\in \mathbb{F}_q (\xi)^*$ such that $\sigma (\eta ) \eta =1$, $w= \eta \frac{\sigma (\lambda )}{\lambda} v $ is a Kummer generator $L(\xi ) /K(\xi)$ with minimal polynomial $X^\ell - a$ where $a = \eta^\ell \big( \frac{\sigma (\lambda )}{\lambda} \big)^\ell c$ and $y:=w+ \sigma (w )$ is a generator for $L/K$ and its minimal polynomial is $P^\ell_{1, \alpha } (X)$ with $\alpha= a + \sigma (a)$ and $v_{\mathfrak{p}} ( \alpha ) = -m $.

\end{proof}
\begin{lemma}\label{form}
Let $\ell$ be an odd integer. Suppose $K= \mathbb{F}_q(x)$. Let $L/K$ be a cyclic extension of degree $\ell$ with $q \equiv -1 \ mod \ \ell$. There is $w$ a Kummer generator for $L(\xi)/ K(\xi)$ whose minimal polynomial is $X^\ell -a$ such that $y:=\sigma (w) + w$ is a generator for $L/K$ so that the minimal polynomial of $y$ over $K$ is $ P^{\ell}_{1 , \alpha} (X)$ as in lemma \ref{1} where $ \alpha = a + \sigma (a)$ and $\sigma(w) w =1$ such that at any $\mathfrak{p}$ place of $K$ either $v_{\mathfrak{p}} (\alpha ) = - m$ where $0\leq m \leq \ell -1$ or $v_{\mathfrak{p}} (\alpha ) \geq 0$. 
 Moreover, when $v_{\mathfrak{p}} (\alpha ) =-m$ where $0\leq m \leq \ell -1$  then $\mathfrak{p}$ is ramified, $v_{\mathfrak{p}} (\alpha ) \geq 0 $ then $\mathfrak{p}$ is unramified and  $v_{\mathfrak{p}_\infty } (\beta )\geq 0$ where $\mathfrak{p}_\infty $ is the pole divisor of $x$ so that $\mathfrak{p}_\infty $ is unramified.  
\end{lemma}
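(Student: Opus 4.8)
The plan is to upgrade the preceding lemma, which normalizes $\alpha$ at a single prescribed place, to a simultaneous normalization at every place of $K$, using the fact that over $K=\mathbb{F}_q(x)$ the representation of Lemma \ref{d} may be taken with coprime numerator and denominator. First I would fix the starting data: by Corollary \ref{cm}(1) there is a Kummer generator $v$ of $L(\xi)/K(\xi)$ with minimal polynomial $X^\ell-c$, with $\sigma(v)v=1$ and $v+\sigma(v)$ generating $L/K$; here $c=v^\ell$ satisfies $c\,\sigma(c)=(v\sigma(v))^\ell=1$. Applying Lemma \ref{d} over $K=\mathbb{F}_q(x)$ I write $c=\theta/\sigma(\theta)$ with $\theta=A+\xi B$ and $A,B\in\mathbb{F}_q[x]$ coprime. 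Coprimality is the crucial gain: if an irreducible $\pi\in\mathbb{F}_{q^2}[x]$ divided both $\theta$ and $\sigma(\theta)$, it would divide $(\xi-\xi^{-1})B$ and $(\xi^{-1}-\xi)A$, hence both $A$ and $B$, a contradiction; so $\theta$ and $\sigma(\theta)$ have disjoint zero divisors in $K(\xi)=\mathbb{F}_{q^2}(x)$, and every finite zero of $c$ lies over an even–degree (split) place of $K$. Moreover $\deg\theta=\deg\sigma(\theta)=\max(\deg A,\deg B)$ since $1,\xi$ are $\mathbb{F}_q$-independent, so $c$ has valuation $0$ at the unique (inert, because $\deg\mathfrak{p}_\infty=1$) place above $\mathfrak{p}_\infty$; this already forces $v_{\mathfrak{p}_\infty}(\alpha)\ge 0$ for any generator built from such a $\theta$.

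The heart of the argument is a global reduction. Writing the finite zero divisor of $\theta$ as $\sum_i e_i\mathfrak{q}_i$ with $\mathfrak{q}_i=(\pi_i)$ and $e_i=\ell k_i+m_i$, $0\le m_i\le\ell-1$, I set $\lambda=\prod_i\pi_i^{k_i}\in K(\xi)^\ast$ and replace $v$ by $v'=\tfrac{\sigma(\lambda)}{\lambda}v$. Then $v'$ is again a Kummer generator with $\sigma(v')v'=1$ and $c'=(v')^\ell=(\sigma(\lambda)/\lambda)^\ell c$. Using the disjointness of the supports of $\theta$ and $\sigma(\theta)$ one gets $v_{\mathfrak{q}_j}(c')=m_j$ and $v_{\sigma(\mathfrak{q}_j)}(c')=-m_j$, with $c'$ of valuation $0$ at every other finite place; and since $\deg\lambda=\deg\sigma(\lambda)$, the factor $\sigma(\lambda)/\lambda$ is a unit above $\mathfrak{p}_\infty$, so the infinite place is untouched. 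Thus every place $\mathfrak{p}_\xi$ of $K(\xi)$ satisfies $|v_{\mathfrak{p}_\xi}(c')|\le\ell-1$.

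The remaining difficulty is that $v'+\sigma(v')$ need not generate $L/K$, because $\sigma(\lambda)/\lambda$ is not constant. Here I invoke Lemma \ref{3}: there is a \emph{constant} $\eta\in\mathbb{F}_q(\xi)^\ast$ with $\eta\sigma(\eta)=1$ such that $w:=\eta v'$ makes $y:=w+\sigma(w)$ a generator of $L/K$. Since $\eta$ is a unit at every place, $a:=w^\ell=\eta^\ell c'$ has exactly the same valuations as $c'$, so $|v_{\mathfrak{p}_\xi}(a)|\le\ell-1$ everywhere and $v$ of $a$ is $0$ above $\mathfrak{p}_\infty$; also $\sigma(w)w=1$, whence by Lemma \ref{1} the minimal polynomial of $y$ is $P^{\ell}_{1,\alpha}(X)$ with $\alpha=a+\sigma(a)$. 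Finally, $u=1$ gives $\sigma(a)=a^{-1}$, so $\alpha=a+a^{-1}$; at a finite place the argument in the proof of Theorem \ref{ram} then yields $v_{\mathfrak{p}}(\alpha)=\min\big(v_{\mathfrak{p}_\xi}(a),v_{\sigma(\mathfrak{p}_\xi)}(a)\big)=-|v_{\mathfrak{p}_\xi}(a)|$ when $v_{\mathfrak{p}_\xi}(a)\neq 0$, and $v_{\mathfrak{p}}(\alpha)\ge 0$ otherwise. Hence $v_{\mathfrak{p}}(\alpha)=-m$ with $0\le m\le\ell-1$ at every finite place, $\mathfrak{p}$ is ramified precisely when $m\ge 1$ and unramified precisely when $v_{\mathfrak{p}}(\alpha)\ge 0$, and $v_{\mathfrak{p}_\infty}(\alpha)\ge 0$ so $\mathfrak{p}_\infty$ is unramified.

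The main obstacle is the tension between the two normalizations in the last two paragraphs: reducing the valuations at all places simultaneously forces a non-constant $\lambda$, which destroys the property that $y$ generates $L/K$, while the only available repair is Lemma \ref{3}, whose correcting factor one must verify does not reintroduce large valuations. The point that makes the scheme work is precisely that the repair element of Lemma \ref{3} is a constant, hence invisible to every valuation, so performing the steps in the order ``reduce, then restore the generator'' succeeds; checking that the coprime model of Lemma \ref{d} keeps $\mathfrak{p}_\infty$ unramified and that the supports of $\theta$ and $\sigma(\theta)$ are disjoint are the only genuinely $\mathbb{F}_q(x)$-specific inputs.
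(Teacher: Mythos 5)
Your proposal is correct and follows essentially the same route as the paper's own proof: start from Corollary \ref{cm}, write $c$ as a quotient $\theta/\sigma(\theta)$ with coprime data via Lemma \ref{d}, reduce the exponents modulo $\ell$ in the UFD $\mathbb{F}_{q^2}[x]$ by twisting with $\sigma(\lambda)/\lambda$, restore generation with the constant $\eta$ from Lemma \ref{3}, and read off the valuations of $\alpha = a + \sigma(a)$, including $v_{\mathfrak{p}_\infty}(a)=0$ from $\deg\theta=\deg\sigma(\theta)$. The only differences are cosmetic: you supply explicitly the coprimality argument for $\theta,\sigma(\theta)$ and the inertness of $\mathfrak{p}_\infty$, which the paper asserts without proof.
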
 
\begin{proof} 
By Corollary \ref{cm}, there is $w$ a Kummer generator for $L(\xi)/ K(\xi)$ whose minimal polynomial is $X^\ell -c$ such that $z:=\sigma (v) + v$ is a generator for $L/K$ so that the minimal polynomial of $z$ over $K$ is $ P^{\ell}_{1 , \beta} (X)$ as in lemma \ref{1} where $ \beta = c + \sigma (c)$ and $\sigma( v) v=1 $. 
By Lemma \ref{d}, there is $\gamma \in F_q(\xi)[x]^*$ such that $(\sigma (\gamma ) , \gamma)=1$ and 
$$ c = \frac{\sigma (\gamma )}{\gamma}. $$
Since $F_q(\xi)[x]$ is a unique factorization domain and we write 
$$\gamma = \prod_{i=1}^t \gamma_i^{e_i}$$
where $\gamma_i$ are distincts irreducible polynomials in $\mathbb{F}_q[x]$ and $e_i= \ell s_i +m_i$ where $0 \leq m_i \leq \ell -1$, for $0\leq i \leq t$. 
Since $(\sigma (\gamma ) , \gamma)=1$ we have $\sigma (\gamma_i ) \neq \gamma_i$. 

Let $\lambda =  \prod_{i=1}^t \gamma_i^{-s_i}$ so that $\theta = \lambda^\ell \gamma = \prod_{i=1}^t \gamma_i^{m_i}$ and 
$$a:=\frac{\sigma (\theta )}{\theta }=\frac{\sigma (\lambda^\ell \gamma )}{\lambda^\ell\gamma}= \frac{\prod_{i=1}^t \sigma(\gamma_i )^{m_i}}{\gamma_i^{m_i}}$$

Moreover, $ \frac{\sigma (\lambda )}{\lambda} v$ is a Kummer generator for $L (\xi) 
\ K(\xi)$. By Lemma \ref{3}, there is $\eta\in \mathbb{F}_q (\xi)^*$ such that $\sigma (\eta ) \eta =1$, $w= \eta \frac{\sigma (\lambda )}{\lambda} v $ is a Kummer generator $L(\xi ) /K(\xi)$ with minimal polynomial $X^\ell - a$ where $a = \eta^\ell \big( \frac{\sigma (\lambda )}{\lambda} \big)^\ell c$ and $y:=w+ \sigma (w )$ is a generator for $L/K$ and its minimal polynomial is $P^\ell_{1, \alpha } (X)$ with $$\alpha= a + \sigma (a) = \frac{ \gamma^2 + \sigma (\gamma )^2}{\gamma \sigma (\gamma)}=\frac{ \theta^2 + \sigma (\theta )^2}{\prod_{i=1}^t (\gamma_i \sigma (\gamma_i))^{m_i}}$$
 $\gamma_i \sigma (\gamma_i)$ is then an irreducible polynomial in $\mathbb{F}_q[x]$ and $ (\theta \sigma (\theta ), \theta^2 + \sigma (\theta )^2)=1$ so that if $\mathfrak{p}_i$ is the finite place of $K$ corresponding to $\gamma_i \sigma (\gamma_i)$, $v_{\mathfrak{p}_i} (a + \sigma (a) )= - m_i$. Clearly, at any other finite place, $ v_{\mathfrak{p}_i} (a + \sigma (a) )\geq  0$.
 
Finally, since $deg (\theta) = deg( \sigma (\theta ))$, we have $v_{\mathfrak{p}_\infty} ( a)=0$, so that $v_{\mathfrak{p}_\infty} (a + \sigma (a)) \geq 0$, concluding the proof of the lemma. 

\end{proof}

\bibliographystyle{abbrv}

\begin{thebibliography}{9}
\bibitem{Vil} 
G.D. Villa-Salvador. 
\textit{Topics in the Theory of Algebraic Function Fields}.
Birkh\"{a}user, 2006.
\bibitem{Con2} 
K. Conrad. 
\textit{Cyclotomic extension}. 
Preprint.
\bibitem{MWcubic} 
S. Marques and K. Ward. 
\textit{A complete classification of cubic function fields over any finite field}. 
Preprint, 2017.
\bibitem{MWcubic2} 
S. Marques and K. Ward. 
\textit{Cubic fields: A primer}. 
Preprint, 2017.
\end{thebibliography}

\end{document}